\newcommand{ \pp }{{\mathbb P}}
\newcommand{ \zz}{{\mathbb Z}}
\newcommand{ \CC}{{\mathcal C}}
\newcommand{ \OO}{{\mathcal O}}
\def\cocoa{{\hbox{\rm C\kern-.13em o\kern-.07em C\kern-.13em o\kern-.15em A}}}
\newcommand{\rank}{\operatorname{rank}}
\newtheorem{theorem}{Theorem}[section]
 \newtheorem{corollary}[theorem]{Corollary}
 \newtheorem{lemma}[theorem]{Lemma}
 \newtheorem{proposition}[theorem]{Proposition}
 \newtheorem{conjecture}[theorem]{Conjecture}
  \theoremstyle{definition}
 \newtheorem{question}[theorem]{Question}
 \newtheorem{definition}[theorem]{Definition}
 \theoremstyle{remark}
 \newtheorem{remark}[theorem]{Remark}
  \newtheorem{example}[theorem]{Example}
\begin{document}

\title[Sets of points which project to complete intersections]{Sets of points which project to complete intersections, and unexpected cones}
\date{}
\author[]{Luca Chiantini}
\address{Luca Chiantini. Dipartimento di Ingegneria dell'Informazione e Scienze Matematiche, Universit\`a di Siena, Italy}
\email{luca.chiantini@unisi.it}
\author[]{Juan Migliore}
\address{Juan Migliore. Department of Mathematics, University of Notre Dame, Notre Dame, IN 46556 USA}
\email{migliore.1@nd.edu}

\begin{abstract}
The paper is devoted to the description of those non-degenerate sets of points $Z$ in $\mathbb P^3$ whose general projection to a general plane is a complete intersection of curves in that plane. One large class of such $Z$ is what we call $(a,b)$-grids. We relate this problem to the {\em unexpected cone property} $\CC(d)$, a special case of the unexpected hypersurfaces which have been the focus of much recent research. After an analysis of  $\CC(d)$ for small $d$, we show that a non-degenerate set of $9$ points has a general projection that is the complete intersection of two cubics if and only if the points form a $(3,3)$-grid. However, in an appendix we describe a set of $24$ points that are not a grid but nevertheless have the projection property. These points arise from the $F_4$ root system. Furthermore, from this example we find subsets of $20$, $16$ and $12$ points with the same feature. 
\end{abstract}

\thanks{
{\bf Acknowledgements}: 
Migliore was partially supported by Simons Foundation grant \#309556. Chiantini was partially supported by the Italian INdAM-GNSAGA. The authors of the appendix thank the Centro Internazionale per la Ricerca Matematica (CIRM), which supported the Workshop on Lefschetz Properties and Jordan Type in Algebra, Geometry and Combinatorics where that work was initiated. We also thank the referee for the careful reading of our paper, and for the useful comments.
}

\keywords{ cones,  complete intersections, projections, special linear systems, unexpected varieties, base loci}

\subjclass[2010]{14M10 (primary);  14N20; 14N05; 14M07 (secondary)}

\maketitle
\thispagestyle{empty}


\section{Introduction}
The main purpose of  this paper is to introduce a systematic study of a problem on the projective geometry of finite sets, namely the classification of 
sets of points, $Z$, in $\mathbb P^3$ whose general projections $\pi(Z)$ to a  plane are  complete intersections. What makes this problem especially interesting is that (as we shall see) outside of a certain class of such $Z$, this property seems to be extremely rare. And yet, in the appendix we give a highly non-trivial example of just such a set. In studying this simple-sounding property, one quickly sees deep connections to  interpolation theory and to the classification of {\it unexpected hypersurfaces}.

If $Z \subset \mathbb P^3$ is already a planar complete intersection then so is its general projection, so we assume that $Z$ is non-degenerate. Looking for more examples, one immediately thinks of four general points, and after a little thought one extends this to any set of points arising as the intersection of a set of $a$  lines in one ruling of a smooth quadric surface with a set of $b$ lines on the other ruling (which below we will call an $(a,b)$-grid). (The case $a=2$ actually does not need to lie on a quadric, but it still follows the same pattern). Trying to extend this in turn, one could try to study sets of $ab$ points arising as the intersection of a curve of degree $a$ and a curve of degree $b$ meeting in those $ab$ points in  $\mathbb P^3$. However, building on a result of Diaz (\cite{D86}) and of Giuffrida (\cite{G88}), at the beginning of section \ref{grid section} we  show that the $(a,b)$-grids are the only such examples. (See Proposition \ref{extend DG})  And yet a spectacular example arose almost by accident in a recent workshop, and this example is detailed in an appendix below, written by all participants of our work group. 
This paper arose as a first step to understanding how such a phenomenon is possible, and the connection to unexpected hypersurfaces (in particular unexpected cones) was a natural outgrowth.

Unexpected hypersurfaces have been studied fairly extensively recently (and in a sense, also classically), in different settings. On a classical level, special linear systems have been studied for more than a century in countless settings. Questions of maximal rank curves (e.g.  \cite{BE85},  \cite{HH82}, and more recently \cite{BDSSS19}) are closely related. The famous SHGH Conjecture \cite{Seg61,Har86,Gim87,Hir89} was one of the motivations for the more modern work on unexpected hypersurfaces. More recently,  for instance, \cite{CHMN18}, \cite{HMNT}, \cite{DMO}, \cite{Szpond}, \cite{HMT} have all explored this new direction.

For us, we will say that a reduced subscheme $V \subset \mathbb P^n$ {\em admits an unexpected hypersurface of degree $d$ and multiplicity $m$} if, for a general point $P \in \mathbb P^n$ we have
\[
\dim [I_V \cap I_P^m]_d > \max \left \{ 0, \dim [I_V]_d - \binom{m-1+n}{n} \right \}.
\]
In the literature it sometimes happens that $P$ is replaced by a union of general points (and then the multiplicities can differ among the points), or even a general variety (usually linear) of higher dimension. The first extended study of unexpected hypersurfaces came in \cite{CHMN18} where $V$ was a finite set of points in $\mathbb P^2$ and $m = d-1$. In this paper $V$ will almost always be a finite set of points, and we will always have $m = d$ ({\em unexpected cones}). We refer to the existence of unexpected surfaces with $m=d$ as {\em Property} $\CC(d)$, {\em the unexpected cone property}.

Suppose $Z \subset \mathbb P^3$ is a finite set of points whose general projection is the complete intersection in the plane of curves $C_1,C_2$ of degrees $a,b$ respectively. Let $P \in \mathbb P^3$ be a general point. Clearly $Z$ lies on the cone over $C_1$ and the cone over $C_2$, both with vertex $P$. Since having the general projection be a complete intersection is such a rare property, it is not surprising that it turns out that these cones are unexpected in the above sense. Thus in this paper we necessarily have to focus largely on the properties $\CC(d)$. 
The unexpected cone property usually holds for finite sets in $\pp^3$  only when their
general projection is in special position. In particular, it turns out that there is a strict
connection between grids, unexpected cone properties, and finite sets in $\pp^3$
whose general projection is a complete intersection.  These connections are somewhat subtle (see Remark \ref{ptci}), and are explored further in section \ref{geom section}.

Fix integers $a,b$ with $ a\leq b$. As indicated above, a set  $Z \subset \mathbb P^3$
of $ab$ points is called an {\em $(a,b)$-grid} if there is a set of $a$ pairwise skew lines $L_1,\dots,L_a$ and a set of $b$ pairwise skew lines $L_1',\dots,L_b'$ 
such that each $L_i$ meets each $L_j'$, and $Z$ is the set of the intersection points $L_i\cap L_j'$, $i=1,\dots a,\ \  j=1,\dots,b$.
As remarked above, the general projection of an $(a,b)$-grid is a complete intersection of type $(a,b)$. Indeed, this was noticed for instance by D. Panov, 
in his reply to a question raised  by F. Polizzi -- see {\tt MathOverflow} Question 67265 in \cite{CDFPGR}. 

Grids are also examples of finite sets which admit a special class of unexpected surfaces. Indeed, we show in Proposition \ref{CC2}  that a non-degenerate set $Z$ of at least $6$ points satisfies $\CC(2)$ if and only if $Z$ is contained in a pair of skew lines, each containing at least $3$ points of $Z$. In section \ref{C(3) section} we also carefully analyze $\CC(3)$ for grids.

The main results of the paper are 
Theorem \ref{gridex}, which study the properties $\CC(d)$ for grids, Proposition \ref{CC2} which characterizes property $\CC(2)$ for finite sets of points, and Theorem \ref{proj CI33}, which says that if $Z$ is a non-degenerate set of $9$ points whose general projection to a plane is a complete intersection 
of cubics then $Z$ must be a $(3,3)$-grid.

As noted above, the existence of sets whose general projection is a complete intersection is quite rare, and
our analysis suggests that such sets are in special position with respect to linear subspaces. Recall that a set of points $Z \subset \mathbb P^3$ 
such that no $3$ points of $Z$ are collinear and no $4$ points of $Z$ are coplanar are said to be in 
{\it  general position} in the language of \cite{harris} page 7, although we will use the term {\em linear general position}, 
to avoid confusion with a general set of points. The work in this paper leads us to make the following conjecture.

\begin{conjecture}
If $Z \subset \mathbb P^3$ is a set of at least $5$ points in linear general position then the general projection of $Z$ to a plane is not a complete intersection.
\end{conjecture}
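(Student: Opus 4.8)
The plan is to suppose the general projection $\pi(Z)$ is a complete intersection and to contradict linear general position by exhibiting too many low-degree cones through $Z$. Write $N=|Z|$. A planar complete intersection of $N$ points has type $(a,b)$ with $ab=N$ and $a\le b$; if $a=1$ then $\pi(Z)$ lies on a line, hence $Z$ lies in the plane spanned by that line and the centre of projection, contradicting non-degeneracy (which, for $N\ge 5$, is forced by linear general position); this in particular settles the case $N$ prime. So we may assume $N=ab$ with $2\le a\le b$ and $(a,b)\ne(2,2)$. Now invoke the cone dictionary: if $\pi(Z)$ is the complete intersection of a curve $C_a$ of degree $a$ and a curve $C_b$ of degree $b$ in a general plane, then $Z$ lies on the cones $\Gamma_a$, $\Gamma_b$ over $C_a$, $C_b$ with vertex the general centre $P$, and for general $P$ one has $\dim[I_Z\cap I_P^t]_t=\dim[I_{\pi(Z)}]_t$ for all $t$. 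Since a complete intersection of type $(a,b)$ has $\dim[I]_a$ equal to $1$ (if $a<b$) or $2$ (if $a=b$), we learn that $Z$ lies on a cone of degree $a$ with general vertex, and in fact on precisely a $1$- or $2$-dimensional linear system of such cones.

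The crucial point is to compare this with Castelnuovo's general position bound: $N$ points in linear general position in $\mathbb P^3$ impose at least $\min(N,3t+1)$ independent conditions on forms of degree $t$, so $\dim[I_Z]_a$ is small, and when $a\le 3$ one checks directly that the expected dimension $\max\{0,\dim[I_Z]_a-\binom{a+2}{3}\}$ of the linear system of degree-$a$ cones through $Z$ with general vertex is strictly smaller than the $1$ or $2$ we have produced. Thus $Z$ satisfies Property $\CC(a)$, and we can appeal to the classification results of the paper. If $a=2$, Proposition \ref{CC2} (applicable because linear general position with $N\ge 6$ forces $Z$ non-degenerate) puts $Z$ on a pair of skew lines each carrying at least $3$ points, contradicting the no-three-collinear hypothesis. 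If $a=b=3$, then $N=9$ and Theorem \ref{proj CI33} forces $Z$ to be a $(3,3)$-grid, which again has collinear triples. If $a=3$ and $b\ge 4$, the same bound applied in degree $b$ shows that $Z$ satisfies both $\CC(3)$ and $\CC(b)$, and one would push the analysis of $\CC(3)$ from Section \ref{C(3) section} — strengthened by the $\CC(b)$ information, which already excludes, for instance, points on a twisted cubic — to conclude that $Z$ is in special position.

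The main obstacle is the range $a\ge 4$. Here the expected dimension $\binom{a+2}{2}-3a-1=a(a-3)/2$ of the system of degree-$a$ cones through $N$ points in linear general position is already $\ge 2$, so the $1$- or $2$-dimensional system we produced is no longer unexpected: Property $\CC(a)$ may genuinely fail and the dimension count gives nothing. (The $16$-point configuration of type $(4,4)$ in the appendix shows that the projection property without the grid structure really occurs, though that set is not in linear general position.) To treat this range I would drop the dimension count and use the geometry of the two cones: by B\'ezout, $\Gamma_a\cap\Gamma_b$ is a curve of degree $ab$ contained in both cones, hence equal to the union of the $ab$ lines joining $P$ to the points of $Z$, so $Z$ is a transversal section of a cone over a planar complete intersection. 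I would then try to transport the arithmetically Gorenstein, or Cayley--Bacharach, structure of $\pi(Z)$ — in particular the Cayley--Bacharach relation in degree $a+b-3$ and the symmetry of its Hilbert function — back through $\pi$ into constraints on cones through subsets of $Z$, and play these against the refined Castelnuovo bounds for points in linear general position lying on low-degree subvarieties, to force a collinearity or a coplanarity. Turning the Gorenstein structure of $\pi(Z)$ into a usable statement about $Z$ itself when $a\ge 4$ is, I expect, the crux, and the reason the statement is at present only a conjecture.
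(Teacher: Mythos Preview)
This statement is recorded in the paper as a \emph{conjecture}; the paper gives no proof, only the supporting evidence of Proposition~\ref{CC2}, Theorem~\ref{proj CI33}, and the examples in the appendix. Your proposal is therefore not being measured against an existing argument.

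Your reductions for small $a$ are essentially correct and track the paper's evidence. The case $a=1$ is immediate. For $a=2$ your dimension count works: linear general position with $N\ge 6$ gives $h_Z(2)\ge\min(N,7)$ by the Castelnuovo bound, hence $\dim[I_Z]_2\le 4$ and the expected dimension of quadric cones with general vertex is $\le 0$; the single cone over the conic containing $\pi(Z)$ then forces $\CC(2)$, and Proposition~\ref{CC2} contradicts the no-three-collinear hypothesis. For $(a,b)=(3,3)$ you invoke Theorem~\ref{proj CI33} directly, which is exactly what the paper establishes. You are also right that the case $a=3$, $b\ge 4$ is \emph{not} covered by the paper: Section~\ref{C(3) section} analyses $\CC(3)$ only for $\ell(Z)\le 9$, so there is no result there to ``push'' when $\ell(Z)=3b\ge 12$; this case is genuinely open as well.

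Your diagnosis of the obstruction at $a\ge 4$ --- that the expected number of degree-$a$ cones through $ab$ points in linear general position is already $\binom{a+2}{2}-(3a+1)=\frac{a(a-3)}{2}\ge 2$, so the projection hypothesis no longer yields an unexpected cone and the $\CC(a)$ machinery gives nothing --- is exactly why the paper leaves the statement as a conjecture. The Cayley--Bacharach / Gorenstein line you sketch at the end is a plausible direction, but as you yourself say, it is not a proof; no one, including the authors, currently has one.
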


We are not aware of any examples of non-degenerate sets of points in $\mathbb P^n$, $n \geq 4$, whose general projection to $\mathbb P^{n-1}$ is a complete intersection.
\smallskip

The outline of the paper is as follows. In the second section we collect some background, definitions and facts that we will use in this paper. In section \ref{grid section} we give some facts about grids related to the unexpected cone properties, culminating in Theorem \ref{gridex}. In section \ref{C(2) section} we classify sets of points with property $\CC(2)$. Section \ref{C(3) section} considers sets of points with property $\CC(3)$, as well as the projection property. The main result is Theorem \ref{proj CI33}, which says that if $Z$ is a non-degenerate set of $9$ points whose general projection to a plane is a complete intersection of cubics then $Z$ must be a $(3,3)$-grid. Section \ref{geom section} explores further the connections between the projection property and the unexpected cone properties. 

The main purpose of section \ref{non-grid ex section} (Appendix) is to give a very surprising (dare we say unexpected?) example of $24$ non-degenerate points that do not form a $(4,6)$-grid, but whose general projection is nevertheless a complete intersection of type $(4,6)$. Furthermore, we find subsets of $20$, $16$ and $12$ points, again not  grids, whose general projection is a complete intersection. The original example of $24$ points arose in a work group at the Workshop on Lefschetz Properties and Jordan Type in Algebra, Geometry and Combinatorics in Levico Terme in 2018 after a careful study of an observation in \cite{HMNT}, and the participants of that work group are all authors of the appendix.


\section{Notation and background} \label{background}
Throughout this paper we work over  projective spaces with base field  $\mathbb K$ of characteristic~$0$.

Let $Z \subset \mathbb P^n$ be a reduced, zero-dimensional scheme. We set  $\ell(Z)=$ cardinality of $Z$. If $P \in \mathbb P^n$, we denote by $mP$ the scheme defined by the ideal $I_P^m$. 

\begin{definition}
Let $V \subset \mathbb P^n$ be a reduced, non-degenerate subscheme of dimension $\leq n-2$. 
We say that $V \subset \mathbb P^n$ {\em admits an unexpected hypersurface of degree $d$ and multiplicity $m$} if, for a general point $P \in \mathbb P^n$, we have 
$$
\dim [I_V \cap I_P^m]_d > \max \left \{ 0, \dim [I_V]_d - \binom{m-1+n}{n} \right \}.
$$
That is, $V$ admits an unexpected hypersurface with respect to $P$ of degree $d$ and multiplicity $m$ if the $\binom{m-1+n}{n}$ conditions imposed by $mP$
on forms of degree $d$ vanishing on $V$ are not independent. 
When the righthand side of the above inequality is equal to 0, any element of the linear system corresponding to $[I_V \cap I_P^m]_d$ is called an {\em unexpected hypersurface}. When $\dim [I_V \cap I_P^m]_d  = 1$, we say that the unexpected hypersurface is {\em unique}.
\end{definition}

\begin{remark}
It is worth noting that $\dim [I_V]_d$ is taken as given, and there is no concern for whether this dimension is unexpected in any sense or not. For instance, $V$ may be a finite set of points that do not impose independent conditions on hypersurfaces of degree~$d$, and a priori this does not influence whether $V$ admits an unexpected hypersurface or not.
\end{remark}

\noindent In this paper our primary focus will  be for the case where $V$ is a reduced, zero-dimensional scheme in $\mathbb P^3$, and for emphasis we will denote this scheme by $Z$ rather than $V$. In this case we refer to {\em unexpected surfaces}.

\begin{proposition}[\cite{HMNT}] \label{cone in P3}
Let $C$ be a reduced, equidimensional, non-degenerate curve of degree $d$ in $\mathbb P^3$ ($C$ may be reducible, disconnected, and/or singular 
but note that $d \geq 2$ since $C$ is non-degenerate, with $C$ being two skew lines if $d=2$). Let $P \in \mathbb P^3$ be a general point. 
Then the cone over $C$ with vertex $P$ is an unexpected surface of degree $d$ for $C$ with multiplicity $d$ at $P$. It is the unique unexpected surface of this degree and multiplicity.
\end{proposition}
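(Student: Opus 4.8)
The plan is to prove the three assertions of the proposition in turn: (i) the cone $F$ over $C$ with vertex $P$ gives a nonzero element of $[I_C\cap I_P^d]_d$, so that this linear system is nonempty; (ii) this linear system is exactly one-dimensional, which will give both existence of \emph{the} unexpected surface and its uniqueness; and (iii) $\dim[I_C]_d\le \binom{d+2}{3}$, which forces the right-hand side of the defining inequality to be $\max\{0,\le 0\}=0<1$, so that $F$ is genuinely unexpected. Note that only (i) and (ii) use the genericity of $P$; (iii) is a statement about $C$ alone.

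For the geometric steps, put $P=[0:0:0:1]$, so that $[I_P^d]_d$ is exactly the space of degree-$d$ forms in $x_0,x_1,x_2$, i.e. the degree-$d$ cones with vertex $P$. Since $P$ is general, $\pi_P\colon C\to\mathbb P^2$ is birational onto its image (a general projection of a reduced space curve that is not a line is birational onto its image, and distinct components of $C$ have distinct images), so $\bar C:=\pi_P(C)$ is a reduced plane curve of degree exactly $d$. The cone $F$ over $\bar C$ — equivalently over $C$ — with vertex $P$ is then a degree-$d$ surface singular of multiplicity $d$ at $P$, so $0\ne F\in[I_C\cap I_P^d]_d$. For uniqueness, take any $0\ne G\in[I_C\cap I_P^d]_d$: being in $[I_P^d]_d$ it is a cone with vertex $P$, and since it vanishes on $C$ it vanishes on every line $\overline{PQ}$, $Q\in C$, hence on the whole cone over $C$; restricting to the plane $H=V(x_3)\not\ni P$, $G|_H$ is a degree-$d$ plane curve vanishing on the reduced degree-$d$ curve $\bar C$, so the defining equation of $\bar C$ divides $G|_H$ and has equal degree, whence $G|_H$, and therefore $G$, is a scalar multiple of $F$. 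Thus $\dim[I_C\cap I_P^d]_d=1$.

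The heart of the argument is (iii), i.e. showing $h_C(d)\ge\binom{d+2}{2}$. Since $C$ is reduced and equidimensional of dimension $1$, $R/I_C$ has positive depth, so a general linear form $\ell$ is a nonzerodivisor; from $0\to (R/I_C)(-1)\xrightarrow{\ \ell\ }R/I_C\to R/(I_C,\ell)\to 0$ we get $h_C(i)-h_C(i-1)=h_{R/(I_C,\ell)}(i)\ge h_\Gamma(i)$, where $\Gamma$ is the reduced general hyperplane section, a set of $d$ points, and $h_\Gamma$ is its honest Hilbert function. (Here $R/(I_C,\ell)$ need not be saturated — for two skew lines its Hilbert function even decreases — which is why we only get an inequality, but that is enough.) For $d\ge 3$, $\Gamma$ is non-degenerate in $\mathbb P^2$, so $h_\Gamma(1)=3$; as $h_\Gamma$ is non-decreasing and grows by at least $1$ until it stabilizes at $d$, we have $h_\Gamma(i)\ge\min\{i+2,d\}$ for $i\ge 1$, hence $h_C(d)=\sum_{i=0}^d\big(h_C(i)-h_C(i-1)\big)\ge 1+\sum_{i=1}^d\min\{i+2,d\}=\binom{d+2}{2}+(d-3)\ge\binom{d+2}{2}$. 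The single remaining case $d=2$ ($C$ a pair of skew lines, whose general hyperplane section is degenerate) is checked by hand: $\dim[I_C]_2=4=\binom{4}{3}$.

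I expect the main obstacle to be part (iii), and within it two points that must be handled carefully: the passage from the possibly non-saturated ring $R/(I_C,\ell)$, whose Hilbert function can misbehave, to the honest (monotone) Hilbert function of the reduced section points; and the fact that the general hyperplane section of a non-degenerate curve of degree $\ge 3$ spans $\mathbb P^2$ — which has to be argued (for reducible $C$, component by component) or cited, and which genuinely fails at $d=2$, forcing that degree to be quarantined. By contrast, (i) and (ii) are routine once one invokes the standard birationality of general projections of reduced space curves.
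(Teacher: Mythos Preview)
The paper does not actually prove this proposition: it is quoted from \cite{HMNT} and stated without proof, so there is no argument in the paper to compare your attempt against.

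That said, your proof is essentially correct and self-contained. Steps (i) and (ii) are clean: identifying $[I_P^d]_d$ with degree-$d$ forms on $\mathbb P^2$ and observing that $[I_C\cap I_P^d]_d\cong [I_{\bar C}]_d$ is one-dimensional (since $\bar C$ is a reduced plane curve of degree exactly $d$) is the right way to get both existence and uniqueness at once. Step (iii) is the substantive part, and your approach via the general hyperplane section works: the chain $h_C(i)-h_C(i-1)=\dim[R/(I_C,\ell)]_i\ge h_\Gamma(i)$ is valid because $I_\Gamma$ is the saturation of the image of $I_C$ in $R/(\ell)$, and the arithmetic $1+\sum_{i=1}^d\min\{i+2,d\}=\binom{d+2}{2}+(d-3)$ checks out. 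The one step you flag yourself --- that for $d\ge 3$ a general hyperplane section of a non-degenerate (possibly reducible) curve in $\mathbb P^3$ spans the hyperplane --- is true but does require an argument; for irreducible $C$ it is classical, while for reducible $C$ a short case analysis (or an incidence-variety dimension count on pairs $(H,\ell)$ with $C\cap H\subset\ell$) suffices, and you correctly isolate $d=2$ as the case where it genuinely fails. With that lemma supplied, the proof is complete.
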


\begin{remark} \label{pts-curves}
As noted in \cite{HMNT} Corollary 2.5, if $Z \subset C$ is a set of points such that $[I_C ]_d = [I_Z ]_d$ then the cone over $C$ is an unexpected surface of degree $d$ and multiplicity $d$ also for $Z$.
\end{remark}

The extremal case, in which the unexpected surface has degree $d$ and multiplicity $d$ at $P$, is the main target of our analysis. We introduce, 
in this case, the notion of the \emph{unexpected cone property}.

\begin{definition}
Let $Z \subset \mathbb P^3$ be a reduced, non-degenerate zero-dimensional subscheme and let $d$ be a positive integer. We say that $Z$ has the {\em unexpected cone property} $\CC(d)$ if, for a general point $P \in \mathbb P^3$, there is an {\em unexpected} surface $S_P$ of degree $d$ and multiplicity $d$ at $P$ 
containing the points of $Z$. 
\end{definition}

\noindent Notice that by Bezout's theorem, $S_P$ must be a cone with vertex $P$ over some curve $C$ of degree $d$ (at the least, it is the cone over a hyperplane section of $S_P$, but there can be other such curves).  The delicate conditions are the very existence of $S_P$ containing $Z$ (for each general $P$) and that it be unexpected (which is a numerical condition). Notice also that since we require $Z$ to be non-degenerate, we must have $\ell(Z) \geq 4$.

Families of cones passing through $Z$, with general vertex, are related to families of plane curves passing through a general plane projection of $Z$.
Namely, $dim [I_Z \cap I_P^d]_d$ is equal to the dimension of forms of degree d containing the projection.

The unexpectedness of the cones implies that a plane projection of $Z$ from a general point $P$ has some peculiarities in its postulation. For instance, we have the following.

\begin{proposition} \label{indep cond}
If $Z \subset \mathbb P^3$ is a set of points satisfying $\CC(d)$ then the general projection $\pi_P(Z)$ of $Z$ to a plane is a set of $\ell(Z)$ points that do not impose independent conditions on the linear system of plane curves of degree $d$.
\end{proposition}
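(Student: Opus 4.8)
The plan is to translate the defining inequality of $\CC(d)$ into a statement about plane curves through the projected set, exploiting the dictionary noted just before the statement. Fix a general $P \in \pp^3$ and choose coordinates so that $P = [0:0:0:1]$. Then a form of degree $d$ with multiplicity $\geq d$ at $P$ is precisely a form $F(x_0,x_1,x_2)$ not involving $x_3$, and such an $F$ vanishes on $Z$ if and only if it vanishes on $W := \pi_P(Z) \subset \pp^2$; hence $[I_Z \cap I_P^d]_d = [I_W]_d$ as vector spaces (the latter computed in $\mathbb{K}[x_0,x_1,x_2]$). Since a general $P$ lies on no secant line of $Z$, the projection $\pi_P$ is injective on $Z$, so $\ell(W) = \ell(Z)$.

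Next I would recall the standard fact that $W$ imposes independent conditions on plane curves of degree $d$ exactly when $\dim[I_W]_d = \binom{d+2}{2} - \ell(W)$, while in general $\dim[I_W]_d \geq \binom{d+2}{2} - \ell(W)$ (the rank of the evaluation map is at most $\ell(W)$). So it suffices to establish the strict inequality $\dim[I_W]_d > \binom{d+2}{2} - \ell(Z)$.

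This follows from a short chain of inequalities. Using $\CC(d)$ together with $\max\{0,t\} \geq t$ gives $\dim[I_Z \cap I_P^d]_d > \dim[I_Z]_d - \binom{d+2}{3}$. Because $Z$ is a set of $\ell(Z)$ points in $\pp^3$, it imposes at most $\ell(Z)$ conditions on degree-$d$ forms, so $\dim[I_Z]_d \geq \binom{d+3}{3} - \ell(Z)$. Combining these two bounds and applying Pascal's identity $\binom{d+3}{3} = \binom{d+2}{3} + \binom{d+2}{2}$ yields
\[
\dim[I_W]_d \;=\; \dim[I_Z \cap I_P^d]_d \;>\; \binom{d+2}{2} - \ell(Z),
\]
which is exactly what we needed.

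I do not expect a genuine obstacle: once the correspondence between cones through $Z$ with vertex $P$ and plane curves through $\pi_P(Z)$ is in place, the rest is bookkeeping. The two points that need a little care are (i) checking that the identification $[I_Z \cap I_P^d]_d = [I_W]_d$ holds for a general --- hence $Z$-avoiding and secant-avoiding --- choice of $P$, so that in particular $\ell(W) = \ell(Z)$; and (ii) invoking the definition of $\CC(d)$ through the weak inequality $\max\{0,t\} \geq t$, which lets one avoid splitting into cases according to the sign of $\dim[I_Z]_d - \binom{d+2}{3}$.
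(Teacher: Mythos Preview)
Your proof is correct and follows essentially the same approach as the paper: both use the identification $\dim[I_Z\cap I_P^d]_d=\dim[I_{\pi_P(Z)}]_d$, the $\CC(d)$ inequality, the bound $\dim[I_Z]_d\geq\binom{d+3}{3}-\ell(Z)$ (equivalently $h_Z(d)\leq\ell(Z)$), and Pascal's identity. The only difference is cosmetic: the paper phrases the final step as a contradiction (assume independent conditions, deduce $\ell(Z)<h_Z(d)$), whereas you argue directly that $\dim[I_W]_d>\binom{d+2}{2}-\ell(Z)$; your explicit handling of the $\max\{0,\cdot\}$ and the coordinate description of cones with vertex $P$ add a bit more detail than the paper gives, but nothing new is happening.
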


\begin{proof}
The condition $\CC(d)$ implies
\[
\dim [I_Z]_d - \binom{d+2}{3} < \dim [I_Z \cap I_P^d]_d = \dim [I_{\pi_P(Z)}]_d.
\]
Suppose that $\pi_P(Z)$ imposes independent conditions on the linear system of plane curves of degree $d$, so $\dim [I_{\pi_P(Z)}]_d = \binom{d+2}{2} - \ell(Z)$. This means
\[
\dim [I_Z]_d < \binom{d+2}{3} + \binom{d+2}{2} - \ell(Z) = \binom{d+3}{3} - \ell(Z).
\]
Thus $\ell(Z) < \binom{d+3}{3} - \dim [I_Z]_d$. But the latter is the Hilbert function of $Z$ in degree $d$, which is always less than or equal to $\ell(Z)$ since $Z$ is zero-dimensional. Contradiction.
\end{proof}

\begin{remark}
The contrapositive of Proposition \ref{indep cond} is worth stating: If the general projection $\pi_P(Z)$ is a set of points that impose independent conditions on curves of degree $d$ then $Z$ admits no unexpected cone of degree $d$.
\end{remark}

We will often use, in the sequel, the definition and the properties of the Hilbert functions of finite sets in projective spaces, and their
difference functions.
 
\begin{definition} For a finite set $Z$ of points in a projective space $\pp^n$ define the Hilbert function $h_Z: \zz\to\zz$ as the rank of the natural restriction map,
i.e. for all $i$:
$$  h_Z(i) = \rank H^0\OO_{\pp^n}(i) \to H^0\OO_Z(i).$$
Equivalently, 
\[
h_Z(i) = \dim_{ \mathbb K} [R/I_Z]_i.
\]
The difference of the Hilbert function is $\Delta h_Z(i)=h_Z(i)-h_Z(i-1)$. The \emph{$h$-vector } of $Z$ is the set of non-zero values of $\Delta h_Z(i)$
\end{definition}

The following properties are rather elementary and well known. We will use them  in our arguments below.

\begin{proposition}\label{elem} 
Let $Z \subset \mathbb P^n$ be a finite set of $d$ points. Then:

\begin{itemize}
\item[(a)] $h_Z(i)=0$ for all $i<0$; $h_Z(0)=1$; $h_Z(i) = |Z| = d$ for all $i \gg 0$. 
\item[(b)] $h_Z(i)\geq h_Z(i-1)$ (i.e.$\Delta h_Z(i)\geq 0$) for all $i$, with equality if and only if $h_Z(i) = d$  or $i<0$. 
\item[(c)] The following conditions are equivalent:

\begin{itemize}
\item[(i)] $h_Z(d-2) < d$;

\item[(ii)] $h_Z(d-2) = d-1$;

\item[(iii)] the points of $Z$ are collinear.

\end{itemize}

\item[(d)] Improving (a), we have $h_Z(i) = |Z| = d$ for all $i \geq d-1$.
\end{itemize}
\end{proposition}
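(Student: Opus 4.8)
The plan is to deduce all four items from two standard inputs: that the homogeneous coordinate ring $A=R/I_Z$ of a reduced finite set of points is Cohen--Macaulay of Krull dimension $1$ (so that a general linear form $\ell$ is a non-zerodivisor on $A$ --- concretely, $\ell$ avoids the finitely many degree-one parts of the minimal primes of the radical ideal $I_Z$), and an explicit surjectivity estimate bounding the regularity. Items (a) and (d) come first and feed the rest. For (a): $h_Z(i)=0$ for $i<0$ since $[R]_i=0$, and $h_Z(0)=1$ since the constants restrict to the nonempty set $Z$ as a one-dimensional space; the claim for $i\gg 0$ will be subsumed by (d). For (d), for each $p\in Z$ and each $q\in Z\setminus\{p\}$ pick a hyperplane through $q$ missing $p$; the product of these $d-1$ linear forms is an element of $[I_{Z\setminus\{p\}}]_{d-1}$ not vanishing at $p$, and multiplying it by suitable powers of a linear form nonzero at $p$ produces such an element in every degree $i\ge d-1$. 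Letting $p$ range over $Z$ shows the evaluation map $[R]_i\to\mathbb{K}^{d}$ hits each standard basis vector, hence is onto, so $h_Z(i)=d$ for all $i\ge d-1$, which also finishes (a).

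For (b), multiplication by a general linear form $\ell$ yields, since $\ell$ is a non-zerodivisor on $A$, a short exact sequence $0\to A_{i-1}\xrightarrow{\,\cdot\ell\,}A_i\to (A/\ell A)_i\to 0$, so $\Delta h_Z(i)=\dim_{\mathbb{K}}(A/\ell A)_i\ge 0$. Because $A/\ell A$ is a standard graded Artinian algebra it is generated in degree $1$, so $(A/\ell A)_i=0$ forces $(A/\ell A)_j=0$ for all $j\ge i$; combining this with $\sum_k\dim_{\mathbb{K}}(A/\ell A)_k=\lim_i h_Z(i)=d$ (from (a) and (d)) pins down exactly the degrees at which $\Delta h_Z$ vanishes, yielding the equality clause.

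For (c), the implications (ii)$\Rightarrow$(i) is trivial and (iii)$\Rightarrow$(ii) holds because $d$ collinear points have $h_Z(i)=\min\{i+1,d\}$ (restrict from $\mathbb{P}^n$ to the spanning line, where the restriction map is surjective), so $h_Z(d-2)=d-1$. The substantive direction is (i)$\Rightarrow$(iii). Write $h'(j):=\Delta h_Z(j)=\dim_{\mathbb{K}}(A/\ell A)_j$ and let $i_0$ be the least degree with $h_Z(i_0)=d$. By the generated-in-degree-one remark, $h'(j)\ge 1$ for $1\le j\le i_0$, while $h'(0)=1$; hence $h_Z(j)\ge j+1$ for $j\le i_0$, forcing $i_0\le d-1$, whereas hypothesis (i) gives $i_0\ge d-1$, so $i_0=d-1$. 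Then $h'$ is $\ge 1$ on the $d$ integers $0,1,\dots,d-1$ and $0$ elsewhere, yet $\sum_j h'(j)=d$, so $h'(j)=1$ for $0\le j\le d-1$; in particular $h_Z(1)=2$, meaning the linear forms in $I_Z$ cut out a line, so $Z$ is collinear --- and $h_Z(d-2)=(d-2)+1=d-1$, which also gives (ii).

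The only real obstacle is bookkeeping at the boundary degrees (the behaviour at $i=0$ and at $i=i_0$) and invoking Cohen--Macaulayness of $A$ --- equivalently, that a general linear form meets $Z$ "transversally enough" to be a non-zerodivisor --- cleanly; once that is in hand everything is formal manipulation of the exact sequence for multiplication by $\ell$ together with the degree-$(d-1)$ surjectivity bound.
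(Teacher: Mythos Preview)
Your proof is correct, and it uses the same two basic tools as the paper --- the multiplication-by-$\ell$ short exact sequence and products of linear forms separating points --- but you arrange them differently. You prove (d) first, directly, by building for each $p\in Z$ a degree-$(d-1)$ form vanishing on $Z\setminus\{p\}$ and not at $p$; the paper instead deduces (d) only at the end, as a consequence of (c) and (b). For the substantive implication (i)$\Rightarrow$(iii) in (c), the paper gives a direct geometric argument: fixing $P_1,P_2$ and an arbitrary further point $P_d$, it multiplies a general linear form through $P_1,P_2$ by general linear forms through $P_3,\dots,P_{d-1}$, and the failure of $Z$ to impose independent conditions in degree $d-2$ forces $P_d$ onto the line $\overline{P_1P_2}$. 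Your route is purely numerical: you squeeze the $h$-vector between the lower bound $h'(j)\geq 1$ (from the standard-graded property of $A/\ell A$) and the constraint $\sum h'(j)=d$, forcing $h'\equiv 1$ on $\{0,\dots,d-1\}$ and in particular $h_Z(1)=2$, whence $Z$ lies in the vanishing of $n-1$ independent linear forms. Your argument makes the full $h$-vector of a collinear set explicit as a by-product and gets (i)$\Rightarrow$(ii) for free; the paper's argument is more directly geometric and does not need (d) as input. For (iii)$\Rightarrow$(ii) the paper invokes B\'ezout on the line, while you use surjectivity of restriction to the spanning $\mathbb{P}^1$; these are equivalent here.
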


\begin{proof}
These are all standard facts, but since we are not aware of a good reference we briefly comment on the proof. Part (a) follows because $R/I_Z$ is a standard graded Cohen-Macaulay algebra of depth = Krull dimension = 1, so its Hilbert polynomial is the constant $\deg (Z) = |Z|$. For a general linear form $L$ and any integer $i$ we have the exact sequence
\[
0 \rightarrow [R/I_Z]_{i-1} \stackrel{\times L}{\longrightarrow} [R/I_Z]_i \rightarrow [R/(I_Z,L)]_i \rightarrow 0.
\]
This implies the first part of (b). But since $R/(I_Z,L)$ is a standard graded algebra, once it is zero in degree $i$ it must be zero in all larger degrees, so by (a) we are done.  

For (c), (ii) trivially implies (i). Assume (i), so $Z$ imposes $h_Z(d-2) < d$ conditions on forms of degree $d-2$. Choose any point and relabel if necessary so that it is $P_d$. Any form of degree $d-2$ vanishing on $P_1,\dots,P_{d-1}$ must also vanish on $P_d$. Define $F$ as the product of a general linear form $L_{1,2}$ vanishing on $P_1$ and $P_2$ and a general linear form $L_i$ vanishing on each $P_i$ for $3 \leq i \leq d-1$. By generality none of the $L_i$ vanish on $P_d$ for $3 \leq i \leq d-1$, so $L_{1,2}$ must vanish on $P_d$. Since the point was arbitrarily chosen, it follows that all the points lie on the line spanned by $P_1$ and $P_2$, so we have (iii). Let $Z$ be a set of $d$ collinear points on a line $\lambda$. By Bezout's theorem, any form of degree $d-2$ vanishing on $Z$ must vanish on all the points of $\lambda$, so $h_Z(d-2) < d$. But (a) and (b) imply $h_Z(d-2) \geq d-1$, so we have (ii). Finally, (d) follows from (c) (ii) and (b).
\end{proof}

A fundamental property of the Hilbert function, that we will need below, is the following: 

\begin{theorem}\label{BGM} Let $Z$ be a finite set in $\pp^n$. Assume that, for some $i_0,k>0$ the Hilbert function
of $Z$ satisfies $\Delta h_Z(i_0)=\dots =\Delta h_Z(i_0+k)=1$. Then $\Delta h_Z(i)\leq 1$ for all $i\geq i_0$  and 
$Z$ contains a subset of $i_0+k$ collinear points.
\end{theorem}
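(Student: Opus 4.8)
The plan is to separate the statement into a numerical part, that $\Delta h_Z(i)\le 1$ for all $i\ge i_0$, which follows at once from Macaulay's theorem, and a geometric part, the existence of the collinear subset, which carries the real content and which I would extract from the theory of maximal growth of Hilbert functions.

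\emph{Numerical part, and normalising the run.} Since $Z$ is reduced and zero-dimensional, $R/I_Z$ is Cohen--Macaulay of dimension $1$, so a general linear form $\ell$ is a non-zero-divisor and the Artinian reduction $\overline A=R/(I_Z,\ell)$ is a standard graded algebra with Hilbert function $\Delta h_Z$. Hence $\Delta h_Z$ is an O-sequence and Macaulay's growth bound applies to it. Because $i_0>0$, the Macaulay representation of the value $1$ in degree $i_0$ is simply $\binom{i_0}{i_0}$, and Macaulay's bound gives $\Delta h_Z(i_0+1)\le 1^{\langle i_0\rangle}=\binom{i_0+1}{i_0+1}=1$; an immediate induction then gives $\Delta h_Z(i)\le 1$ for all $i\ge i_0$, which is the first assertion. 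Since moreover $\Delta h_Z\ge 0$ and, by Proposition~\ref{elem}(b), $\Delta h_Z$ vanishes exactly from the degree in which $h_Z$ first equals $|Z|$ onward, the $h$-vector beyond degree $i_0$ is a string of $1$'s followed by $0$'s. I may therefore enlarge $[i_0,i_0+k]$ to the maximal run of $1$'s, say $\Delta h_Z(t)=\cdots=\Delta h_Z(s)=1$ with $s\ge i_0+k$ and with either $t=0$ or $\Delta h_Z(t-1)\ge 2$.

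\emph{Geometric part.} If $t=0$, then $h_Z(i)=i+1$ for $0\le i\le s$; in particular $h_Z(1)=2$, so $Z$ spans only a line and is itself collinear, with $|Z|\ge s+1>i_0+k$ points. So suppose $t\ge 1$ and $\Delta h_Z(t-1)\ge 2$. Then the run of $1$'s is a ``maximal growth'' situation for $I_Z$, and the plan is to invoke the value-$1$ case of the structure theorem of Bigatti, Geramita and Migliore (a descendant of Davis' theorem), reproducing its argument if needed. The mechanism runs through the generic initial ideal $B=\operatorname{gin}(I_Z)$ for the degree reverse lexicographic order: $B$ is strongly stable (characteristic $0$), has the same Hilbert function as $I_Z$, and modulo the variable of that order which is a non-zero-divisor on $R/B$ one has that $\Delta h_Z(d)$ counts the degree-$d$ monomials in the remaining variables that lie outside $B$. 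For $t\le d\le s$ this count is $1$, and since the complement of a strongly stable ideal is closed under raising variable indices, the unique survivor in each such degree is the pure power of that degree of one fixed variable, the same for all $d$ in the range; hence throughout this range $B$ contains every such monomial other than that pure power. This says that $[I_Z]_d$ is as large as $h_Z(d)$ permits for $d$ in the relevant range, which is exactly the hypothesis of the maximal-growth theorem; that theorem produces a line $\lambda$, and matching $|Z\cap\lambda|$ against the block of $1$'s gives $|Z\cap\lambda|\ge s+1$, a fortiori $\ge i_0+k$.

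\emph{Where the difficulty lies.} The numerical half and the case $t=0$ are routine; the real obstacle is converting the combinatorial maximal-growth information into an honest collinear subset of $Z$. The delicacy is that $\operatorname{gin}(I_Z)$ is never the ideal of a reduced scheme once $|Z|\ge 2$ (a reduced Borel-fixed zero-dimensional scheme is a single point), so one cannot read the collinear points off $B$ directly; instead one must argue, as Bigatti--Geramita--Migliore do, that the forms which maximal growth forces into $[I_Z]_d$ all vanish along a single common line (for instance via Green's hyperplane restriction theorem and a study of the base locus of $[I_Z]_d$), and then track lengths carefully so that a run of $k+1$ ones indeed yields a collinear subset of size $i_0+k$. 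That is the step I would take from the literature.
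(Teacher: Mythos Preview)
Your proposal is correct and takes essentially the same route as the paper: the paper's entire proof is a one-line citation of [BGM94] Theorem~3.6(e), and you likewise defer the geometric content to the Bigatti--Geramita--Migliore maximal-growth theorem, while additionally spelling out the numerical assertion $\Delta h_Z(i)\le 1$ via Macaulay's bound on the Artinian reduction and isolating the trivial case $t=0$. Your sharper count $|Z\cap\lambda|\ge s+1\ge i_0+k+1$ is in fact what the paper tacitly uses elsewhere (e.g.\ extracting five collinear points from the $h$-vector $(1,3,1,1,1)$), so your bookkeeping is consistent with how the result is applied.
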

\begin{proof} The fact is classically known. E.g. it is an easy consequence of [BGM94] Theorem 3.6 (e).
\end{proof}

We will often use the following property, which is an easy, well-known  consequence of the relations between
the resolutions of two linked sets of points (see for instance \cite{migliorebook} section 6.1).

\begin{proposition} \label{linkci} Let $Z_1,Z_2$ be disjoint, non-empty finite sets of points in $\pp^2$, such that $Z=Z_1\cup Z_2$ is a complete intersection of type
$(a,b)$ with defining ideal $(F,G)$.  Assume that $Z_1$ is a complete intersection of type $(a,c)$.  Then $Z_2$ is a complete intersection of type $(a,b-c)$ if and only if $F$ is part of a minimal generating set of $I_{Z_1}$.
\end{proposition}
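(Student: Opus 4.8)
The plan is to use the theory of liaison for zero-dimensional schemes in $\mathbb{P}^2$, via the mapping cone construction that relates a minimal free resolution of a linked ideal to that of the original. Since $Z = Z_1 \cup Z_2$ is a complete intersection of type $(a,b)$ with $I_Z = (F,G)$, the subschemes $Z_1$ and $Z_2$ are directly linked by $(F,G)$, meaning $I_{Z_2} = (F,G) : I_{Z_1}$ and vice versa. The Koszul resolution of $I_Z$ is $0 \to R(-a-b) \to R(-a)\oplus R(-b) \to I_Z \to 0$, and dualizing this against the resolution of $I_{Z_1}$ (after an appropriate twist by the canonical module of $R/(F,G)$) produces, via the mapping cone, a free resolution of $I_{Z_2}$ — generally not minimal. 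The key principle is that cancellation of a common free summand between the two "ends" being glued happens exactly when a degree-$a$ (or degree-$b$) generator of $I_{Z_1}$ can be taken to be $F$ (resp. $G$) itself.

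The key steps, in order, are as follows. First I would record that $Z_1$ being a complete intersection of type $(a,c)$ means $I_{Z_1}$ has a minimal generating set consisting of two forms, of degrees $a$ and $c$, with Koszul resolution $0 \to R(-a-c) \to R(-a)\oplus R(-c) \to I_{Z_1} \to 0$. Next, apply the mapping cone to the inclusion $I_Z \hookrightarrow I_{Z_1}$ (or, more precisely, to the dual setup governing the liaison $I_{Z_2} = I_Z : I_{Z_1}$), obtaining a free resolution of $I_{Z_2}$ whose free modules are read off from those of $I_{Z_1}$ and of the complete intersection $(F,G)$. Then analyze when this resolution is forced to be non-minimal: the relevant redundancy is precisely a repeated twist $R(-a)$ appearing in a position allowing cancellation, and one shows this cancellation occurs if and only if the degree-$a$ generator of $I_{Z_1}$ may be chosen to be $F$ — i.e., $F$ is part of a minimal generating set of $I_{Z_1}$. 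After cancellation the resolution becomes $0 \to R(-a-(b-c)) \to R(-a) \oplus R(-(b-c)) \to I_{Z_2} \to 0$, which (being a length-one resolution by two forms) exhibits $Z_2$ as a complete intersection of type $(a, b-c)$. Conversely, if no cancellation is possible, the minimal resolution of $I_{Z_2}$ has at least three generators, so $Z_2$ is not a complete intersection; this gives both directions.

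Alternatively — and this is likely the cleaner route to write up — one can argue numerically using the $h$-vectors together with the liaison formula $\Delta h_{Z_2}(i) = \Delta h_Z(i) - \Delta h_{Z_1}(a+c-1-i)$ (up to a shift, where $(a,c)$ is the type of $Z_1$ relative to the ambient $(a,b)$), combined with the fact that a set of points in $\mathbb{P}^2$ is a complete intersection of type $(a,e)$ if and only if its $h$-vector is that of such a complete intersection AND it lies on a curve of degree $a$ that is a minimal generator. The condition "$F$ is part of a minimal generating set of $I_{Z_1}$" is equivalent to saying $[I_{Z_1}]_a \neq [I_{Z_1}]_{a-1}\cdot[R]_1$ contains $F$ essentially, i.e. the curve $F$ is not already forced by lower-degree elements of $I_{Z_1}$; when $Z_1$ is a CI of type $(a,c)$ with $c \le a$ this can be delicate, which is where the real content sits.

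The main obstacle I anticipate is handling the degenerate overlap case $c = a$ (or more generally when the two generator degrees of $I_{Z_1}$ coincide or interact with $a$), where "$F$ is part of a minimal generating set" is a genuinely nontrivial condition rather than automatic: one must rule out that $F$ lies in the span of the other, lower-degree generators, and correspondingly track carefully whether the mapping cone's repeated twist actually cancels. Keeping the bookkeeping of degrees and twists consistent through the dualization step of liaison is the place where a careless argument would go wrong, so I would lean on the precise statement in \cite{migliorebook} section 6.1 for the relation between the resolutions of directly linked schemes and reduce the proposition to reading off generator degrees from that.
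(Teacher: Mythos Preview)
Your proposal is correct and follows essentially the same approach as the paper: the paper does not give a detailed argument at all, but simply states that the result is a well-known consequence of the relations between the resolutions of two linked sets of points, citing \cite{migliorebook} section~6.1 --- precisely the mapping cone machinery you outline and the very reference you invoke at the end. Your sketch fills in exactly the details the paper leaves implicit.
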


\begin{lemma} \label{planar pts}
Let $Z \subset \mathbb P^3$ be a set of points that lies on a plane. Then $Z$ does not satisfy $\CC(d)$ for any $d$.
\end{lemma}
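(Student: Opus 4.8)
The plan is to show that if $Z$ lies on a plane $H \subset \mathbb P^3$, then for a general point $P$ the dimension $\dim [I_Z \cap I_P^d]_d$ never exceeds the "expected" value $\max\{0, \dim[I_Z]_d - \binom{d+2}{3}\}$, so that $\CC(d)$ fails by definition. The key observation is that since $Z \subset H$, the general projection $\pi_P(Z)$ from $P$ to a plane is (projectively) isomorphic to $Z$ itself: projection from a general point restricted to the plane $H$ is an isomorphism onto its image, because $P \notin H$ and $H$ maps isomorphically to the target plane. In particular $\pi_P(Z)$ is a set of $\ell(Z)$ distinct points lying in linear general position exactly when $Z$ is, and more to the point, $\pi_P(Z)$ imposes exactly as many conditions on plane curves of degree $d$ as $Z$ imposes on plane curves of degree $d$ inside $H$, i.e. $h_{\pi_P(Z)}(d) = h_Z(d)$ where the latter is computed in $H \cong \mathbb P^2$.

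The main step is then to invoke (the contrapositive of) Proposition \ref{indep cond}: it suffices to show that $\pi_P(Z)$ imposes independent conditions on plane curves of degree $d$. But this is false in general for an arbitrary planar $Z$ and large $\ell(Z)$, so I must argue more carefully and directly rather than through Proposition \ref{indep cond}. The cleaner route is to compare $[I_Z \cap I_P^d]_d$ with $[I_Z]_d$ directly. A form $F$ of degree $d$ in $\mathbb P^3$ vanishing on $Z$ and having multiplicity $d$ at the general point $P$: write $H = \{h = 0\}$. Then $F$ restricted to $H$ vanishes on $Z \subset H$, and has multiplicity $\geq d$ at the point $P' = $ (well, $P \notin H$, so instead project). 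The right framework: $\dim[I_Z \cap I_P^d]_d = \dim [I_{\pi_P(Z)}]_d$ (stated in the excerpt just before Proposition \ref{indep cond}), and since $\pi_P(Z) \cong Z$ as abstract point sets lying in a plane with $\pi_P$ an isomorphism on $H$, we get $\dim[I_{\pi_P(Z)}]_d = \dim [I_Z^{(H)}]_d = \binom{d+2}{2} - h_Z^{(H)}(d)$, where $I_Z^{(H)}$ is the ideal of $Z$ in the polynomial ring of $H$. On the other hand, a degree-$d$ form on $\mathbb P^3$ vanishing on the planar set $Z$ is either divisible by $h$ or restricts to a nonzero degree-$d$ form on $H$ vanishing on $Z$; this gives the exact sequence $0 \to [I_Z]_{d-1} \xrightarrow{\times h} [I_Z]_d \to [I_Z^{(H)}]_d$, which need not be surjective, so $\dim[I_Z]_d \leq \dim[I_Z]_{d-1} + \binom{d+2}{2} - h_Z^{(H)}(d)$.

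To finish, I compare the expected value: $\CC(d)$ would require $\binom{d+2}{2} - h_Z^{(H)}(d) > \dim[I_Z]_d - \binom{d+2}{3}$, i.e. $\dim[I_Z]_d < \binom{d+2}{3} + \binom{d+2}{2} - h_Z^{(H)}(d) = \binom{d+3}{3} - h_Z^{(H)}(d)$. But $\dim[I_Z]_d = \binom{d+3}{3} - h_Z(d)$ where now $h_Z(d)$ is the Hilbert function of $Z$ as a subscheme of $\mathbb P^3$; since $Z$ is planar, a standard fact is $h_Z^{(H)}(d) \leq h_Z(d)$ (the restriction to $H$ of degree-$d$ forms on $\mathbb P^3$ vanishing on $Z$ surjects onto... ) — actually equality $h_Z^{(H)}(d) = h_Z(d)$ holds, since evaluation at $Z \subset H$ of a degree-$d$ form on $\mathbb P^3$ factors through restriction to $H$ which is surjective on degree-$d$ parts. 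Hence $\dim[I_Z]_d = \binom{d+3}{3} - h_Z^{(H)}(d)$ exactly, and the strict inequality above becomes $\binom{d+3}{3} - h_Z^{(H)}(d) < \binom{d+3}{3} - h_Z^{(H)}(d)$, a contradiction. I should also separately dispose of the case where the expected value is $0$, i.e. $\dim[I_Z]_d \leq \binom{d+2}{3}$: then $\CC(d)$ requires $\dim[I_{\pi_P(Z)}]_d > 0$, i.e. $h_Z^{(H)}(d) < \binom{d+2}{2}$, which combined with $\dim[I_Z]_d = \binom{d+3}{3} - h_Z^{(H)}(d) > \binom{d+3}{3} - \binom{d+2}{2} = \binom{d+2}{3}$ contradicts $\dim[I_Z]_d \leq \binom{d+2}{3}$. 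The main obstacle I anticipate is bookkeeping the two Hilbert functions (in $\mathbb P^2$ versus $\mathbb P^3$) carefully and justifying cleanly that projection from a general point is an isomorphism on the plane $H$ carrying $Z$ to an isomorphic planar configuration; once that identification $\dim[I_Z \cap I_P^d]_d = \binom{d+2}{2} - h_Z^{(H)}(d)$ is in hand, the numerology is forced.
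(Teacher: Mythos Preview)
Your proposal is correct and follows essentially the same approach as the paper: both arguments hinge on the observation that, since $Z$ lies in a plane $H$ and the general point $P$ lies off $H$, the projection $\pi_P$ restricts to an isomorphism on $H$, so $\pi_P(Z)$ and $Z$ have the same planar Hilbert function; combining the identity $\dim[I_Z\cap I_P^d]_d=\dim[I_{\pi_P(Z)}]_d=\binom{d+2}{2}-h_Z(d)$ with $\dim[I_Z]_d=\binom{d+3}{3}-h_Z(d)$ and the Pascal identity $\binom{d+3}{3}=\binom{d+2}{3}+\binom{d+2}{2}$ gives exactly $\dim[I_Z\cap I_P^d]_d=\dim[I_Z]_d-\binom{d+2}{3}$, which is the paper's computation. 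Your separate treatment of the case where the expected value is $0$ is unnecessary: the equality just established shows $\dim[I_Z]_d-\binom{d+2}{3}\geq 0$ automatically (it equals a dimension), so the $\max$ is always attained by the second term.
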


\begin{proof}
Let $\pi_P$ be the projection of $Z$ to a general plane, from a general point $P$. Then $\pi_P(Z)$ and $Z$ have the same Hilbert function $h_Z$. Since we are considering cones, we have $\dim [I_{\pi_P(Z)}]_d = \dim [I_Z \cap I_P^d]_d$. Then
\[
\begin{array}{l}
\displaystyle \dim [I_Z \cap I_P^d]_d = \dim [I_{\pi_P(Z)}]_d =  \binom{d+2}{2} - h_Z (d)  \\ \\
\displaystyle = \binom{d+3}{3} - \binom{d+2}{3} - h_Z(d) = \dim [I_Z]_d - \binom{(d-1)+3}{3}
\end{array}
\]
so $Z$ does not admit unexpected cones.
\end{proof}


\section{Generalities on grids} \label{grid section}

In this paper we are interested in studying those sets of points $Z$ in $\mathbb P^3$ whose general projection to a general plane is the complete intersection of  curves  in $\mathbb P^2$ of degrees $a$ and $b$ respectively. One way that this can clearly happen is if $Z$ is already the intersection in $\mathbb P^3$ of a curve $A$ of degree $a$ and a curve $B$ of degree $b$. We first show that there is essentially only one situation where this can happen, which we call ``grids," and then we will make a careful study of properties of grids.

Of course we exclude the trivial case where $A$ and $B$ both lie on the same plane. 
Diaz (\cite{D86}) and of Giuffrida (\cite{G88}) independently proved the following result.

\medskip

\begin{quotation}
{\em 
Let $A$ be a reduced curve of degree $a$ and let $B$ be a reduced curve of degree $b$ in $\mathbb P^3$. Assume that both curves are non-degenerate and irreducible. Then the number of intersection points, $m$, of $A$ and $B$ (counted without multiplicity) satisfies $m \leq (a-1)(b-1)+1$. If equality holds then there is a quadric surface $Q$ containing both $A$ and $B$. If furthermore $a \geq 4$ then $Q$ is smooth, and on $Q$ $A$ is a rational curve of type $(a-1,1)$ and $B$ is a rational curve of type $(1,b-1)$.
}
\end{quotation}

\medskip

\noindent

For our current purposes we would like to remove the assumption that $A$ and $B$ are irreducible, and ask when it can happen that $m = ab$. In keeping with the rest of the paper, we make a slight change in the notation for the curves and for the degrees.
We will show the following.

\medskip

\begin{proposition} \label{extend DG}
Let $A$ be a reduced curve of degree $a$ and let $B$ be a reduced curve of degree $b$ in $\mathbb P^3$, both not necessarily irreducible, and such that $A$ and $B$ share no components. Assume that $a \leq b$. Let $m$ be the number of intersection points of $A$ and $B$ (counted without multiplicity). Assume that $m = ab$. Then

\smallskip

\begin{enumerate}

\item[i)] If $a=1$ then $B$ is a disjoint union of plane curves, each of them coplanar with $A$.

\item[ii)] If $2 \leq a $ then either $A\cup B$ is a plane curve, or  both $A$ and $B$ are unions of pairwise skew lines. In the latter case, if we have $a\geq 3$ then there is a smooth quadric surface $Q$ that contains both $A$ and $B$, and $A$ is of type $(a,0)$ and $B$ is of type $(0,b)$ on~$Q$.

\end{enumerate}

\end{proposition}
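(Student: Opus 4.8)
\medskip
\noindent\textit{Proof plan.} The plan is to pass to the irreducible decompositions $A=A_1\cup\dots\cup A_s$ and $B=B_1\cup\dots\cup B_t$, with $\deg A_i=\alpha_i$ and $\deg B_j=\beta_j$ (so $\sum_i\alpha_i=a$ and $\sum_j\beta_j=b$), and to analyze $A\cap B=\bigcup_{i,j}(A_i\cap B_j)$ one pair of components at a time. The first step is to establish the elementary bound
\[
|A_i\cap B_j|\le\alpha_i\beta_j\qquad\text{for every }i,j,
\]
together with a precise description of when equality holds. This is proved by cases: if $A_i$ is a line, slicing $B_j$ by planes through $A_i$ gives $|A_i\cap B_j|\le\beta_j$, with the strict bound $\le\beta_j-1$ unless $B_j$ is a plane curve whose plane contains $A_i$; if $A_i$ and $B_j$ both have degree $\ge2$ and are both non-degenerate, this is the quoted Diaz--Giuffrida inequality, which for $\alpha_i,\beta_j\ge2$ is strictly smaller than $\alpha_i\beta_j$; if one of them, say $A_i$, is a plane curve and $B_j$ is not contained in its plane, then $A_i\cap B_j$ lies in the degree-$\beta_j$ divisor cut on $B_j$ by that plane; and if both are plane curves in distinct planes, then $A_i\cap B_j$ lies on the line common to the two planes. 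Summing over all pairs and using $|A\cap B|=m=ab$,
\[
ab=|A\cap B|=\Bigl|\bigcup_{i,j}(A_i\cap B_j)\Bigr|\le\sum_{i,j}|A_i\cap B_j|\le\sum_{i,j}\alpha_i\beta_j=ab
\]
forces \emph{all} inequalities to be equalities: every pair of components meets in exactly $\alpha_i\beta_j$ distinct points, and the sets $A_i\cap B_j$ are pairwise disjoint. Re-inspecting the equality case of the bound shows, in each situation, that $A_i\cup B_j$ is contained in a plane.

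\smallskip
When $a=1$ this already yields (i). Here $A=L$ is a line; equality gives $|L\cap B_j|=\beta_j$ for each component, and choosing a point $Q\in B_j\setminus L$ and intersecting $B_j$ with the plane $\langle L,Q\rangle$ forces $B_j$ to be a plane curve whose plane contains $L$. Grouping the components $B_j$ according to which plane through $L$ they span, the resulting sub-unions are plane curves coplanar with $L=A$, and they are pairwise disjoint because two distinct planes through $L$ meet only along $L$ while the sets $L\cap B_j$ are pairwise disjoint.

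\smallskip
For (ii) with $a\ge2$, suppose first that $A\cup B$ is not a plane curve (otherwise we are in the first alternative). From the equality analysis, no component can be a non-degenerate curve, since a non-degenerate component of degree $\ge2$ appears in none of the equality cases and hence cannot meet any component of the other curve in the full $\alpha_i\beta_j$ points; thus every component is a line or a plane curve of degree $\ge2$. If some $A_1$ were a plane curve of degree $\ge2$ with plane $\Pi$, the equality analysis would force every $B_j\subseteq\Pi$, hence $B\subseteq\Pi$; since $B$ then spans $\Pi$, the pairwise disjointness of the sets $A_i\cap B_j$ would in turn push every remaining $A_i$ into $\Pi$, making $A\cup B$ planar, a contradiction. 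Hence $A=L_1\cup\dots\cup L_a$ and $B=L_1'\cup\dots\cup L_b'$ are unions of $a$ and $b$ distinct lines, each $L_i$ meets each $L_j'$, and all $ab$ intersection points are distinct. If two of the lines $L_i$ met, the plane they span would (again by disjointness of the intersection sets) have to contain every $L_j'$ and then every $L_i$, so $A\cup B$ would be planar; therefore the $L_i$ are pairwise skew, and symmetrically so are the $L_j'$. Finally, if $a\ge3$, let $Q$ be the unique smooth quadric through the three pairwise skew lines $L_1,L_2,L_3$, which lie in one of its two rulings; each $L_j'$ meets these three lines in three distinct points and hence lies on $Q$, necessarily in the other ruling, so $B$ is of type $(0,b)$ on $Q$; and if $a\ge4$, each further $L_i$ meets $L_1',L_2',L_3'$ in three distinct points of $Q$ and so lies on $Q$ in the first ruling, giving $A$ of type $(a,0)$ on $Q$.

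\smallskip
I expect the main obstacle to be the bookkeeping in the pairwise bound and, especially, in its equality case: one has to treat lines, non-degenerate curves, and plane curves separately (and plane curves in equal versus distinct planes), and in each case extract precisely the conclusion that $A_i\cup B_j$ lies in a plane, together with the structural restriction on the components. Once that is in hand, everything else is forced by the global disjointness of the intersection sets together with the classical fact that three pairwise skew lines lie on a unique (smooth) quadric.
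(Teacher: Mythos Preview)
Your proof is correct and follows essentially the same route as the paper: decompose into irreducible components, force each pair $(A_i,B_j)$ to meet in exactly $\alpha_i\beta_j$ points with pairwise disjoint intersection sets, then use Diaz--Giuffrida to rule out non-degenerate components of degree $\ge 2$ and push everything into a plane or onto skew lines (and a quadric when $a\ge 3$). The only notable difference is that the paper obtains the componentwise bound $|A_i\cap B_j|\le\alpha_i\beta_j$ in one line by projecting $A_i\cup B_j$ generically to a plane, whereas you establish it by a case analysis---your version is a bit longer but has the advantage of simultaneously identifying the equality cases you need downstream.
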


\begin{proof}

The proof is based on the general observation that two reduced curves $C,D$, of degrees $c,d$ respectively, with no common components, cannot meet in more than $cd$ points.
To see this, just project generically $C\cup D$ to a plane.

Let $A = A_1 \cup \dots \cup A_s$ and $B = B_1 \cup \dots \cup B_t$ be the irreducible decompositions of $A$ and $B$, with $\deg A_i = a_i$, $\deg B_j = b_j$,
so that $a = a_1 + \dots + a_s$, $b = b_1 + \dots + b_t$. By the previous general remark,  $m = ab$ means that the intersections
$A_i\cap B_j$ are pairwise disjoint and each $A_i$ meets each $B_j$ in $a_i b_j$ points.

First assume that $A$ is a line, i.e. $A=A_1$ and $a=a_1=1$. For every $j$ pick a point $P_j\in B_j\setminus A$. The plane $\pi_j$ spanned by $A$ and $P_j$ meets $B_j$ in $b_j+1$ points,
thus it contains $B_j$. This proves part i).

Consider now the case $a>1$.

Assume there exists some component of $A$, say $A_1$, of degree $a_1>1$. Then we claim that $A\cup B$ is degenerate. First we show that $A_1$ is degenerate, by using that the fact that the intersection
of $A_1$ and $B_1$ contain $a_1b_1$ points.  Indeed, assume that $A_1$ is non-degenerate. $B_1$ cannot be a line, because this contradicts part i) of the proposition
(with $A,B$ replaced by $B_1,A_1$ respectively). If $B_1$
is degenerate, of degree $b_1>1$, then its plane cannot contain more than $a_1<a_1b_1$ points of $A_1$, and we get a contradiction again. If $B_1$ is non-degenerate,
then we get a contradiction with \cite{D86} or \cite{G88}. Thus $A_1$ is degenerate, and it spans a plane $\pi$. Since for any $j$ the intersection
$A_1\cap B_j$ contains $a_1b_j>b_j$ points, then $\pi$ contains all the components $B_j$, hence $B\subset \pi$. Since $b>1$, then for any $i$ the plane $\pi$
contains $a_ib>a_i$ points of the component $A_i$, hence $\pi$ contains $A$ too, and the claim holds.

Assume now that all the components of $A$ are lines. If two of these lines meet, then they span a plane $\pi$ and, arguing as above, we obtain that $\pi$ contains all the components of $A$ and $B$, i.e. it contains $A \cup B$.

It remains to consider the case in which $A$ is a union of $a>1$ pairwise disjoint lines. In this case, we can repeat the previous argument, with $A,B$ interchanged, and get that
also $B$ is a union of pairwise disjoint lines. If $a>2$, then there exists a smooth quadric $Q$ which contains three lines of $A$. Every line of $B$ meet $Q$ in at least three points, thus $B\subset Q$. Since $b\geq a>2$, all the lines of $A$ meet $Q$ in at least three points, thus also $A\subset Q$. Then ii) holds.
\end{proof}

\begin{definition} A set $Z$ of $ab$ points in $\pp^3$ is an {\em $(a,b)$-grid} if there exists a set of $a$ pairwise skew lines 
$L_1,\dots,L_a$ and a set of $b$ pairwise skew  lines $L_1',\dots,L_b'$, such that $Z=\{L_i\cap L_j' \mid i_1,\dots, a, \ j=1,\dots,b\}$
(in particular, for all $i,j$  the lines $L_i,  L_j'$ are different and incident).
\end{definition}

\begin{remark}
 It is trivial that an $(a,b)$-grid is also a $(b,a)$-grid. Thus throughout this paper we will adopt the convention (without loss of generality) that $a \leq b$.
\end{remark}

\begin{remark}
We first make some geometric observations.

\begin{enumerate}

\item The case $a=1$ is trivial, and we will always assume $a \geq 2$.

\medskip

\item If $a=b=2$ we have a set of four points, and conversely any set of four points is a $(2,2)$-grid. But clearly four general points in $\mathbb P^3$ do not admit an unexpected surface of any degree. Their general projection to $\mathbb P^2$ is, however, a complete intersection.

\medskip

\item If $(a,b) = (2,3)$, the points of $Z$ lie on the smooth quadric surface $Q$ determined by  $L_1',L_2',L_3'$  so the lines determining the $(2,3)$-grid all lie on $Q$.

\medskip

\item If $a = 2$ and $b \geq 4$ is arbitrary, it is not necessarily true that the lines determining the $(2,b)$-grid lie on a quadric surface, although of course $Z$ itself lies on a 4-dimensional (vector space) family of quadrics.

\medskip

\item If  $a \geq 3$, then the lines $L_j'$ meet the unique (smooth) quadric $Q$ which contains $L_1,L_2,L_3$ in at least
$3$ points, hence the lines $L_j'$ are contained in $Q$. Thus each line $L_j'$ is contained in $Q$. It follows that $Z$ is contained in a
quadric, and that the lines $L_i$ are in one ruling while the lines $L_j'$ are in the other. 
\end{enumerate}
\end{remark}

It turns out that for $(a,b)$-grids with $b>2$, the $\CC(a)$ property is automatic. 

\begin{theorem} \label{gridex}
Assume $b \geq a \geq 2$ and $b\geq 3$. Then any $(a,b)$-grid $Z$ satisfies  the unexpected cone property $\CC(a)$.

Furthermore, if $a,b \geq 3$ then $Z$ also satisfies the unexpected cone property $\CC(b)$.
\end{theorem}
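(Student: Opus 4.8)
The plan is to show directly that for a general point $P \in \mathbb P^3$ there is a cone of degree $a$ with vertex $P$ through the $(a,b)$-grid $Z$, and that this cone is unexpected; the $\CC(b)$ statement then follows by the symmetry $a \leftrightarrow b$ of the grid together with the extra hypothesis $a \geq 3$. First I would exhibit the cone. Let $L_1,\dots,L_a$ be the first ruling family of lines whose intersections with $L_1',\dots,L_b'$ cut out $Z$. For a general point $P$, consider the $a$ planes $\Pi_i = \overline{P L_i}$ (each $L_i$ together with $P$ spans a plane since $P \notin L_i$). The product of the $a$ linear forms defining the $\Pi_i$ is a surface of degree $a$ which is a cone with vertex $P$ (it is singular of multiplicity $a$ at $P$, being a union of $a$ planes through $P$), and it contains every $L_i$, hence contains all of $Z$. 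So $\dim [I_Z \cap I_P^a]_a \geq 1$ for every $P$, in particular for general $P$.

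Next I would verify that this cone is unexpected, i.e. that
\[
\dim [I_Z \cap I_P^a]_a > \max\left\{0,\ \dim[I_Z]_a - \binom{a+2}{3}\right\}.
\]
Since the left side is at least $1$, it suffices to show $\dim[I_Z]_a - \binom{a+2}{3} \leq 0$, equivalently $\dim [I_Z]_a \leq \binom{a+2}{3}$. By Proposition \ref{indep cond}'s circle of ideas it is cleanest to compute the Hilbert function of the grid directly. The grid $Z$ of $ab$ points has a well-understood $h$-vector: projecting $Z$ generically to $\mathbb P^2$, or using that the $a$ lines $L_i$ are skew so that $Z$ decomposes as $a$ collinear subsets of $b$ points each, one finds $h_Z(i) = \min\{(i+1), a\}\cdot\min\{(i+1), b\}$-type behavior — more precisely the $h$-vector of an $(a,b)$-grid with $a \leq b$ is $(1, 2, 3, \dots, a-1, a, a, \dots, a)$ with the value $a$ repeated $b - a + 1$ times, so $h_Z(a-1) = ab = \ell(Z)$ and in particular $Z$ fails by exactly $\binom{?}{?}$ to impose independent conditions in degree $a-1$. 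What I actually need is the value in degree $a$: since $h_Z(a-1) = ab$ already (as $a - 1 \geq a - 1$ and the $h$-vector has plateaued by degree $a-1$ when $b \geq a$), we get $\dim[I_Z]_a = \binom{a+3}{3} - ab$. Then $\dim[I_Z]_a - \binom{a+2}{3} = \binom{a+3}{3} - \binom{a+2}{3} - ab = \binom{a+2}{2} - ab$. For this to be $\leq 0$ we need $\binom{a+2}{2} \leq ab$; since $b \geq a$ it suffices that $\binom{a+2}{2} \leq a^2$, i.e. $(a+2)(a+1) \leq 2a^2$, i.e. $a^2 - 3a - 2 \geq 0$, which holds for $a \geq 4$. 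For $a = 2, 3$ (and $b \geq 3$) I would check the inequality $\binom{a+2}{2} \leq ab$ by hand: $a = 2$ needs $6 \leq 2b$, i.e. $b \geq 3$ — true; $a = 3$ needs $10 \leq 3b$, i.e. $b \geq 4$ — and the remaining case $a = b = 3$ needs separate attention, where $\dim[I_Z]_3 - \binom{5}{3} = 10 - 9 = 1 = \dim[I_Z \cap I_P^3]_3$ must be checked not to be an equality making the cone expected; here one uses that the unique cubic cone is not in $[I_Z]_3$ generically, or computes $\dim[I_Z\cap I_P^3]_3 \geq 2$ because the $(3,3)$-grid lies on a quadric $Q$ and the cone over a hyperplane section plus $Q$ times a plane gives extra surfaces.

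The main obstacle I anticipate is precisely this boundary bookkeeping when $(a,b) = (3,3)$ (and to a lesser extent the verification that the grid's $h$-vector is exactly as claimed, which I would pin down via Theorem \ref{BGM} applied to the collinear subsets of size $b$, or by a direct generic hyperplane-section argument). A clean way to handle $a = b = 3$: the grid lies on a smooth quadric $Q$, and on $Q$ the grid is the intersection of two divisors of type $(3,0)$ and $(0,3)$; the cone over $C$ for a general plane section sits in a pencil together with $Q \cdot H$ for a plane $H \ni P$ only if $Q$ passes through... — more carefully, one shows $\dim[I_{\pi_P(Z)}]_3 \geq 2$ by noting $\pi_P(Z)$ is $9$ points failing to impose independent conditions on cubics (they impose only $8$, since $h_Z(3) = 9$ gives $\dim[I_Z]_3 = 1$ wait — recompute: $\dim[I_{\pi_P(Z)}]_3 = \binom{5}{2} - h_Z(3) = 10 - 9 = 1$), so in fact the issue is that $\dim[I_Z\cap I_P^3]_3 = 1$ too and one must instead verify the strict inequality against $\max\{0, \dim[I_Z]_3 - \binom{5}{3}\} = \max\{0, 10 - 10\} = 0$, which gives $1 > 0$ — so the $(3,3)$ case is fine after all once one recomputes $\binom{5}{3} = 10$ correctly. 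Thus the real content is: (i) produce the product-of-planes cone, (ii) establish $\dim[I_Z]_a \leq \binom{a+3}{3} - \binom{a+2}{3} \cdot$(something) via the grid's Hilbert function, reducing to the elementary inequality $\binom{a+2}{2} \leq ab$, and (iii) invoke $a \leftrightarrow b$ symmetry for the $\CC(b)$ claim, noting the product-of-planes construction using $L_1',\dots,L_b'$ needs $b$ planes through $P$ and the same Hilbert-function estimate with $a$ and $b$ swapped, which requires $\binom{b+2}{2} \leq ab$ — and *this* is where $a \geq 3$ is genuinely needed, since $\binom{b+2}{2} \leq ab$ fails for $a = 2$ (it would demand $(b+2)(b+1) \leq 4b$, impossible for $b \geq 2$), explaining the asymmetry in the hypotheses.
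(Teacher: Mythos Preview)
Your approach to $\CC(a)$ is in the right spirit, but the $h$-vector you write down is wrong: an $(a,b)$-grid in $\mathbb P^3$ has $h$-vector $(1,3,5,\dots,2a-1,a,\dots,a)$ (with $b-a$ trailing copies of $a$ when $a<b$), not $(1,2,3,\dots,a,\dots,a)$. In particular $h_Z(a-1)=a^2$, not $ab$, so for $a<b$ one has $\dim[I_Z]_a=\binom{a+3}{3}-a(a+1)$ rather than $\binom{a+3}{3}-ab$. Fortunately $\binom{a+2}{2}-a(a+1)=\frac{(a+1)(2-a)}{2}\le 0$ for $a\ge 2$, so the conclusion for $\CC(a)$ survives with corrected numbers. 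In the case $a=b=3$ the correct computation is $\dim[I_Z]_3-\binom{5}{3}=11-10=1$, so you genuinely need $\dim[I_Z\cap I_P^3]_3\ge 2$; this does hold, via the two distinct product-of-planes cones coming from the two rulings---an argument you mention but then wrongly set aside after a miscomputation.

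The real gap is in $\CC(b)$. Your plan is to swap $a\leftrightarrow b$, produce the single cone $\prod_j\overline{PL'_j}$, and show $\dim[I_Z]_b-\binom{b+2}{3}\le 0$, i.e.\ $\binom{b+2}{2}\le ab$. But this inequality is \emph{false} for any fixed $a$ once $b$ is large: it rearranges to $b^2+(3-2a)b+2\le 0$, a downward parabola condition that fails for $b\gg 0$. Already for $a=3,\,b=4$ it reads $15\le 12$. So exhibiting one cone of degree $b$ is not enough; the expected number of such cones is positive. The paper's argument is different in kind: having established $\CC(a)$, multiply the degree-$a$ cone $F$ by arbitrary elements of $[I_P^{\,b-a}]_{b-a}$ to obtain $\binom{b-a+2}{2}$ independent degree-$b$ cones through $Z$ with vertex $P$, and observe that the product of the $b$ planes $\overline{PL'_j}$ is \emph{not} of this form, giving $\dim[I_Z\cap I_P^b]_b>\binom{b-a+2}{2}$. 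One then checks $\binom{b-a+2}{2}\ge\binom{b+2}{2}-ab$, which simplifies to $a(a-3)\ge 0$; this is where the hypothesis $a\ge 3$ actually enters.
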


\begin{proof} 
We first consider $\CC(a)$.
Let $C$  be the union of the lines $L_1,\dots,L_a$ and let $P$ be a general point. If $2 \leq a < b$, we have $[I_Z]_a = [I_C]_a$ by Bezout's theorem, so Remark \ref{pts-curves} and Proposition \ref{cone in P3} give the result that the cone over $C$ with vertex $P$ is an unexpected cone of degree $a$, so $\CC(a)$ holds. 

Now assume that $a =b \geq 3$. Denoting now by $D$ the union of the $2a$ lines, we note that $D$ is a complete intersection of type $(2,a)$. For a form $F$ of degree $d \leq a-1$, $F$ vanishes on $Z$ if and only if it vanishes on $D$. Thus the Hilbert function of $Z$ agrees with that of $D$ up to degree $a-1$, so the $h$-vector of $Z$ begins
\[
(1,3,5,7,\dots, 2a-1)
\]
(where the $2a-1$ occurs in degree $a-1$). Since $\ell(Z) = a^2$ and the sum of the above integers is $a^2$, this is the entire $h$-vector. It follows that $Z$ imposes independent conditions on forms of degree $a-1$ and above. Hence
\[
\dim [I_Z]_a = \binom{a+3}{3} - a^2.
\]
In general, a point of multiplicity $m$ is expected to impose $\binom{m+2}{3}$ conditions on forms of degree $m$. But then an easy calculation gives
\[
\dim [I_Z]_a - \binom{a+2}{3} = \frac{3a + 2 - a^2}{2} .
\]
If $a=b=3$, this gives that the expected dimension of $\dim [I_Z \cap I_P^3]_3$ is $1$, while both the cone over the $L_i$ and the cone over the $L_j'$ have multiplicity $a$ at $P$ and are hence unexpected. If $a=b \geq 4$, the above calculation gives that we do not expect any surface of degree $a$ and multiplicity $a$ at $P$, so again the cones are unexpected. Hence $Z$ satisfies the unexpected cone property $\CC(a)$.

Next we turn to $\CC(b)$. Now we are assuming $a \geq 3$.

Since we have already taken care of the case $a=b$, we will assume here that $ 3 \leq a<b$. A point of multiplicity $b$ should impose $\binom{b+2}{3}$ conditions on surfaces of degree $b$, and we want to show that 
\[
\dim [I_Z \cap I_P^n]_b > \dim [I_Z]_b - \binom{b+2}{3}.
\]

Let us compute the Hilbert function of $Z$. As before, let $C$ be the union of the lines $L_1,\dots,L_a$, and note $[I_Z]_k = [I_C]_k$ for all $k \leq b-1$. We begin by understanding the Hilbert function of $C$. Consider the exact sequence
\begin{equation} \label{exact seq}
0 \rightarrow [I_C]_k \rightarrow [R]_k \rightarrow H^0(\mathcal O_C(k)) \rightarrow H^1(\mathcal I_C(k)) \rightarrow 0.
\end{equation}
The penultimate term has dimension $a(k+1)$, so to understand the Hilbert function of $C$ we have to understand $h^1(\mathcal I_C(k))$, and in particular understand when it is zero.

It is easy to see that $C$ is linked in a complete intersection of type $(2,a)$ to another union, $C'$, of $m$ skew lines. (If $a \geq 3$ there is a unique quadric containing $C$, and the lines of $C'$ are on the other ruling of the quadric). Clearly from the  sequence (\ref{exact seq}) we have $h^1(\mathcal I_C(k)) = 0$ for $k < 0$, and it is equal to $a-1$ for $k=0$. By the invariance (up to shifts and duals) of the Rao module $\bigoplus_{t \in \mathbb Z} H^1(\mathcal I_C(t))$ (see for instance \cite{migliorebook} Theorem 5.3.1) we know that the last degree for which $h^1(\mathcal I_C(k))$ is non-zero is 
\[
k = 0 +2 + a - 4 = a-2
\]
and furthermore $h^1(\mathcal I_C(a-2)) = a-1$.
(The $2$ and the $a$ are because those are the degrees of the surfaces participating in the link, and the $4$ is $3+1$ since we are working in $\mathbb P^3$). Furthermore, notice that a surface of degree $<a$ contains $C$ if and only if it is a multiple of the defining equation of the quadric. Finally, recall that the Hilbert function of a quadric surface is given by $h_Q(k) = (k+1)^2$ for all $k \geq 0$.

Using (\ref{exact seq}) and the above computation we obtain 
\[
h_C(k) = \dim [R/I_C]_k = 
\left \{
\begin{array}{ll}
 (k+1)^2 &   \hbox{if } k \leq a-1 \\ \\
a (k+1) & \hbox{if } k \geq a
\end{array}
\right.
\]
Consequently $h_Z(b-1) = ab$, i.e. the Hilbert function of $Z$ already stabilizes in degree $b-1$. We obtain
$
\dim [I_Z]_b = \binom{b+3}{3} - ab
$
so 
\[
\dim [I_Z]_b - \binom{b+2}{3} = \binom{b+2}{2} - ab.
\]

Now we use the fact that $Z$ satisfies $\CC(a)$. If $F$ is a defining form for the cone over $C$, then $FG$ has degree $b$ and multiplicity $b$ at the general point $P$, for all $G \in I_P^{b-a}$. But furthermore, the cone over the $b$ skew lines is not of this form. Thus
\[
\dim [I_Z \cap I_P^b]_b > \dim [I_P^{b-a}]_{b-a} = \binom{b-a+2}{2}.
\]
Combining, a calculation shows that $\binom{b-a+2}{2} \geq \binom{b+2}{2} - ab$ as long as $a \geq 3$, so
\[
\dim [I_Z \cap I_P^b ]_b > \binom{b-a+2}{2} \geq \binom{b+2}{2} -ab = \dim [I_Z]_b - \binom{b+2}{3}
\]
so $Z$ satisfies $\CC(b)$.
\end{proof}

\begin{remark}
In the context of Theorem \ref{gridex}, if $a=b = 2$ it is obvious that $Z$ does not satisfy $\CC(2)$ since essentially we have the union of a double point and four general points. If $a = 2$ and $b \geq 3$, one can check  small values of $b$ on the computer to see that such a grid does not satisfy $\CC(b)$.  While the details may be tedious and we have not written one down, we expect that a theoretical proof for all $b$ would boil down to showing that the cones of the form $FG$ (as in the proof) together with the cone over the $b$ skew lines form a basis for $[I_Z \cap I_P^b]_b$.

In particular, a $(2,b)$ grid shows that it is possible for a set of $ab$ points to have a general projection that is a complete intersection, but not satisfy both properties $\CC(a)$ and $\CC(b)$. We believe that this should be a very rare occurrence, however.
\end{remark}


\section{The property $\CC(2)$} \label{C(2) section}

In this section we consider and classify non-degenerate sets $Z$ of points in $\pp^3$ which satisfy the property $\CC(2)$,
i.e. sets with an unexpected quadric cone.

\begin{remark} 
If $Z$ satisfies the property $\CC(2)$, then $\ell(Z)\geq 6$.

Indeed, assume $\ell(Z)\leq 5$. Then the property $\CC(2)$ implies that a general projection of $Z$ to a plane $H$ fails to impose independent conditions on conics, by Proposition \ref{indep cond}. The only way in which a set $W$ of at most $5$ points in a plane $H$ does not impose
independent conditions on conics is that $W$ contains 4 aligned points.

Thus, for a general choice of $P\in\pp^3$, the set $Z$ contains a subset $Z'$ of $4$ points lying in a plane passing through $P$.
By continuity, the subset $Z'$ of $Z$ is fixed, as $P$ moves. Thus, if $P,Q$ are general, then $Z'$ lies in the intersection
of two different planes, passing through $P$ and $Q$ respectively. It follows that at least $4$ points of $Z$ are contained in a line.
Thus $Z$ lies in a plane, a contradiction.   
\end{remark}

Next, we prove that all sets $Z\subset \pp^3$ which satisfy the property $\CC(2)$ lie on two skew lines.

We could use an argument similar to the arguments that we will use to describe set satisfying $\CC(3)$, in the next section.
Instead, we prefer to prove the claim by a description of local deformations of cones. 

\begin{remark} Let $\pp^N$ be the space which parameterizes surfaces of fixed degree $d$ in $\pp^3$. 
Let $Z$ be any subvariety of $\pp^3$.

Let $W_Z$ be the subvariety of singular surfaces containing $Z$. Let $S \in W_Z$ be a point representing a surface with only one singular 
point, which is an isolated double point, at $P$. Then the (Zariski) tangent space to $W_Z$ at the point $S$ is represented by the quotient of $[I]_d$ mod an equation 
of $S$, where $I$ is the ideal of $Z \cup P$. (This is the classical \emph{infinitesimal Bertini's principle},  see e.g. Section 2 of \cite{CC02}.)
\end{remark}

\begin{proposition} \label{CC2} A non-degenerate set $Z$ of $\ell(Z)\geq 6$ points satisfies property $\CC(2)$ if and only if it is contained in a pair
of skew lines $L_1,L_2$, with each line $L_i$ containing at least $3$ points of~$Z$. 
\end{proposition}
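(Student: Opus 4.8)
One direction is immediate from the earlier material: if $Z$ lies on a pair of skew lines $L_1, L_2$ with at least $3$ points on each, then $[I_Z]_2 = [I_C]_2$ where $C = L_1 \cup L_2$ (by Bezout, a quadric through $3$ points of $L_i$ contains $L_i$), so Proposition \ref{cone in P3} together with Remark \ref{pts-curves} gives that the cone over $C$ with vertex a general $P$ is an unexpected surface of degree $2$ and multiplicity $2$ at $P$. Since $\ell(Z) \geq 6$ forces $Z$ non-degenerate in this configuration (one cannot fit $6$ such points in a plane without collinearity issues, but in any case we are given non-degeneracy), $\CC(2)$ holds.

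For the converse, suppose $Z$ is non-degenerate, $\ell(Z) \geq 6$, and $\CC(2)$ holds. For a general $P$ there is an unexpected quadric cone $S_P$ with vertex $P$ containing $Z$; by Bezout $S_P$ is a cone over a conic, and being unexpected (hence the system has larger-than-expected dimension) one checks $\dim[I_Z \cap I_P^2]_2 \geq 1$ is forced together with a numerical drop, so in fact the rank of $S_P$ as a quadric is $\leq 3$. The plan is to use the infinitesimal Bertini principle recorded in the Remark just above: as $P$ varies, $S_P$ moves in the variety $W_Z$ of singular quadrics through $Z$, and at a quadric with a single isolated double point at $P$ the tangent space to $W_Z$ is $[I_{Z \cup P}]_2$ modulo the equation of $S_P$. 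First I would show that a general such cone $S_P$ has rank exactly $3$ (rank $\leq 2$, i.e. a pair of planes, would force $Z$ degenerate or split into two coplanar pieces, contradicting non-degeneracy once $\ell(Z)\geq 6$ and a short argument as in the $\CC(2)$-lower-bound Remark). A rank-$3$ quadric cone over a smooth conic has its vertex a single point $P$, and the singular locus is exactly $\{P\}$. Then I would compute the dimension of $W_Z$ near $S_P$: on one hand the vertices $P$ sweep out (by hypothesis, for $P$ general) a $3$-dimensional family, so $\dim W_Z \geq 3$; on the other hand, the tangent space computation bounds $\dim W_Z$ in terms of $\dim[I_{Z\cup P}]_2 - 1$. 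Forcing these to be compatible pins down $\dim[I_{Z\cup P}]_2$, i.e. the number of conditions $Z\cup P$ (with $P$ a simple point) imposes on quadrics, which is exactly the unexpectedness hypothesis restated — so this bookkeeping has to be done carefully to extract real information rather than a tautology.

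The substantive step is then to pass from "$Z$ lies on a $2$-dimensional family of rank-$\leq 3$ quadric cones whose vertices fill up $\mathbb P^3$" to "$Z$ lies on two skew lines." Here is the mechanism I would use: fix two general points $P, Q$ and the corresponding cones $S_P, S_Q$, rank $3$, over smooth conics. Their intersection $S_P \cap S_Q$ is a curve of degree $4$ containing $Z$. Since $\ell(Z) \geq 6$ and the projection of $Z$ from a general point fails to impose independent conditions on conics (Proposition \ref{indep cond}), the $h$-vector considerations (Proposition \ref{elem}, Theorem \ref{BGM}) should force $\Delta h_Z$ to have a long string of $1$'s, hence a large collinear subset; iterating with the complementary line gives the two skew lines. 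More concretely: consider the line $\ell = \overline{PQ}$ through the two general vertices. The cone $S_P$ contains $\ell$ iff $P$... — actually the cleaner route is to intersect all the cones. Let $\Lambda$ be the intersection of $S_P$ over all general $P$; since $Z \subseteq \Lambda$ and $Z$ is finite this says little directly, so instead I would argue on a single pencil: for $P$ varying along a general line $m$ in $\mathbb P^3$, the cones $S_P$ form a $1$-parameter family and $Z$ is in the base locus; a base locus of a pencil of quadrics that contains $\geq 6$ non-degenerate points and consists of singular quadrics is, by the classification of pencils of quadrics, forced to contain lines — and tracking which lines, for enough pencils, yields that $Z$ sits on a union of two skew lines. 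The main obstacle I anticipate is exactly this last geometric step: ruling out the a priori possibilities that $Z$ lies on a single line (excluded by non-degeneracy), on a conic or a twisted cubic or a plane cubic inside one of the cones, or is split awkwardly between the two rulings — i.e. showing the base locus geometry really collapses to two skew lines and that each line carries $\geq 3$ of the points. I expect the Hilbert-function tools (Theorem \ref{BGM} forcing collinear subsets once $\Delta h_Z(i) = 1$ for a range of $i$, which Proposition \ref{indep cond} makes available) to be what finally closes this, with the "$\geq 3$ points per line" coming from the requirement that a quadric through those points contain the whole line.
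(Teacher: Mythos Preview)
Your plan goes off the rails at the ``rank exactly $3$'' step. You claim that if the general cone $S_P$ were a pair of planes (rank $\leq 2$), this would contradict non-degeneracy. It does not: a pair of distinct planes through a general point $P$ containing $Z$ merely splits $Z$ into two coplanar pieces $Z_1, Z_2$; since the planes move as $P$ moves while (by continuity) the subsets $Z_1, Z_2$ stay fixed, each $Z_i$ lies in the base locus of a non-trivial family of planes, i.e.\ on a line. That is exactly the conclusion you want, not a contradiction. The paper's argument runs in the opposite direction from yours: one assumes for contradiction that the general cone is \emph{irreducible} (rank $3$), uses the infinitesimal Bertini tangent-space computation to get $\dim[I_{Z\cup P}]_2 = 4$ and hence $\dim[I_Z]_2 = 5$, so $\Delta h_Z$ begins $(1,3,1,\dots)$; then Theorem~\ref{BGM} forces all but two points of $Z$ onto a single line, which produces a \emph{reducible} cone through $Z$ with general vertex, contradicting irreducibility. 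Hence the general cone must be a pair of planes after all, and the short argument above finishes.

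Because you inverted this dichotomy, the rest of your sketch (intersecting two rank-$3$ cones to get a quartic curve, pencils of singular quadrics, etc.) is aimed at a configuration that does not occur, and the genuine endgame --- the reducible case --- is dismissed in a parenthetical. You do invoke the right tools (the tangent-space remark, Proposition~\ref{indep cond}, Theorem~\ref{BGM}), but they are deployed at the wrong step. The final clause, that each line carries at least $3$ points, follows not from ``a quadric through those points contains the whole line'' but from a direct count: if one line had only $2$ points, then $Z \cup 2P$ would impose $4 + 3 + 2 = 9$ independent conditions on quadrics and the cone would not be unexpected.
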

\begin{proof} 
If $Z$ consists of $\ell(Z) \geq 6$ points on a pair of skew lines, with at least 3 points on each line, then by Proposition \ref{cone in P3} and Remark \ref{pts-curves}, $Z$ satisfies $\CC(2)$.

Conversely, assume that $Z$ satisfies property $\CC(2)$, so there is an unexpected quadric cone containing $Z$ with vertex (of multiplicity 2) at a general point $P$. We first claim that there is no {\em irreducible} such quadric for general $P$. Suppose, by way of contradiction, that for a general $P$ there is an irreducible quadric cone containing $Z$ and having vertex $P$. Note first that such a quadric has to be unique, since the intersection of two such is a union of four lines containing the points of $Z$, which is not possible when $P$ is general.  So suppose that for a general $P$ there is a unique irreducible quadric cone containing $Z$ and having vertex $P$. 
Since the vertices of these irreducible quadric cones dominate $\pp^3$
by a dimension count, then there is a $3$-dimensional family $W'$ of irreducible quadric cones containing $Z$, with a moving isolated double point.  By the previous remark, the tangent space to $W'$ at a general point $S\in W'$
is represented by the space $[I']_2$ mod an equation of $S$, where $I'$ is the ideal of $Z'=Z\cup \{P\}$, $P$ being the vertex of $S$, 
i.e. a general point. 
So $I'_2$ has dimension $4$. As $P$ is general, this is only possible if the ideal of $Z$, in degree $2$, has dimension 
$5$. 

Thus the Hilbert function $h_Z$ of $Z$ starts with $h_Z(0)=1$, $h_Z(1)=4$, $h_Z(2)=5$, i.e. the first difference $\Delta h_Z$ begins $(1,3,1,\dots)$. Since $\ell(Z) \geq 6$, it follows by Theorem \ref{BGM} that $\Delta h_Z(i) = 1$ for $2 \leq i \leq \ell(Z) - 3$ and hence
$h_Z(i)=\min \{\ell(Z), i+3\}$ for all $i\geq 2$, and moreover all the points of $Z$, except two of
them, are contained in a line. Thus, there exists a reducible quadric cone through $Z$ with vertex at a general point, a contradiction.

It follows that when $Z$ satisfies property $\CC(2)$, for a general choice of $P\in\pp^3$ $Z$ lies in a pair of planes $\pi,\pi'$ meeting at  $P$. These two planes are distinct, since $Z$ is reduced and does not lie in a plane. Moving $P$ generically,
the two planes move in two non-trivial families. The subsets of $Z$ contained in any elements of the two families of planes are fixed,
by continuity. It follows that $Z$ splits in a union $Z=Z_1\cup Z_2$ where each $Z_i$ lies in the base locus of a non-trivial
family of planes, i.e. it lies in a line $L_i$. Since $Z$ does not lie in a plane, the two lines $L_1,L_2$ are skew,
and each of them cannot contain more than $\ell(Z)-2$ points of $Z$.

Finally, if one of the lines contains only two points of $Z$, we have that $Z \cup 2P$ imposes  $4 + (2+1) + 2 = 9$ conditions so the quadric cone is not unexpected, contradicting the assumption $\CC(2)$. 
\end{proof}

\begin{remark}
Thanks to Proposition \ref{CC2} we see immediately that a set of points can satisfy $\CC(2)$ but not have a general projection to a plane that is a complete intersection. For instance, we can take 5 points on $L_1$ and 3 points on $L_2$.
\end{remark}

We end this section with a proposition on the case of points contained in too many quadric cones, which will be useful in the sequel.

\begin{proposition}\label{2pen} Let $Z$ be a set of at least $5$ points in $\pp^3$, such that for a general $P\in\pp^3$ $Z$ lies
in a pencil of quadric cones, with vertex at $P$. Then all the points of $Z$, except at most one of them, lie in a line.
In particular, $Z$ lies in a plane.
\end{proposition}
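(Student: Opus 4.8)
The plan is to translate the hypothesis about quadric cones with vertex at $P$ into a statement about the Hilbert function of $Z$, then to apply Theorem \ref{BGM}. First I would observe that a quadric cone through $Z$ with vertex $P$ corresponds to an element of $[I_{Z \cup \{P\}}]_2$: indeed a quadric surface singular at $P$ is automatically a cone with vertex $P$ (over a conic), and containing $Z$ and being singular at $P$ means lying in $[I_Z \cap I_P^2]_2 = [I_{Z \cup \{P\}}]_2$. (One should note the degenerate possibilities --- a pair of planes through $P$, or a double plane through $P$ --- but these are still ``cones with vertex $P$'' in the sense used here, so they cause no trouble.) The hypothesis that $Z$ lies in a pencil of such cones for general $P$ says precisely that $\dim [I_{Z \cup \{P\}}]_2 \geq 2$ for general $P$.

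Next I would extract the consequence for $h_Z$. Passing $P$ as an additional general point costs at most $3$ conditions (the expected number imposed by a double point on quadrics in $\mathbb P^3$ is $\binom{2+2}{3} = 3$... actually $\binom{3}{3}+\binom{2}{2}\cdot\!$---more simply, $I_P^2$ in degree $2$ has codimension $\binom{3}{1}+1 = 4$ inside $[R]_2$, so imposing vanishing to order $2$ at $P$ cuts dimension by at most $4$). Since $\dim [R]_2 = 10$, we get $\dim [I_Z]_2 \geq \dim[I_{Z\cup\{P\}}]_2 + \big(\dim[R]_2 - \dim[I_P^2]_2\big)$ is the wrong direction; instead use $\dim[I_{Z\cup\{P\}}]_2 \geq \dim[I_Z]_2 - (\dim[R]_2 - \dim[I_P^2]_2) = \dim[I_Z]_2 - 6$, so the hypothesis forces $\dim[I_Z]_2 \geq \dim[I_{Z\cup\{P\}}]_2 + ? $. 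Let me instead argue directly: we want $h_Z(2) = 10 - \dim[I_Z]_2$ to be small. The cleanest route is the infinitesimal-deformation count already used in the proof of Proposition \ref{CC2}: since $P$ is general and $Z$ lies in at least a pencil of quadric cones with vertex $P$, and these vertices dominate $\mathbb P^3$, one gets (by the dimension count plus the infinitesimal Bertini principle from the Remark preceding Proposition \ref{CC2}) that $\dim [I_{Z \cup \{P\}}]_2 \geq 2$ for general $P$ translates, after the standard tangent-space computation, into $\dim [I_Z]_2 \geq 6$, i.e. $h_Z(1) = 4$, $h_Z(2) \leq 4$. Hence $\Delta h_Z$ begins $(1,3,\leq 0,\dots)$; but $\Delta h_Z \geq 0$ always and $h_Z$ is nondecreasing, so in fact $\Delta h_Z(2) = 0$, forcing $h_Z(2) = 4$ and then $h_Z(i) = 4$ for all $i \geq 1$, so $\ell(Z) = 4$, contradicting $\ell(Z) \geq 5$ unless I've been too crude. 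The correct bookkeeping: a pencil means $\dim[I_{Z\cup\{P\}}]_2 \geq 2$; matching this against the expected $\dim[I_Z]_2 - 3$ shows $Z$ together with the general double point fails to impose independent conditions, and the tangent-space argument gives $h_Z(2) = 5$ (one more than for $\CC(2)$ would give $4$). So $\Delta h_Z = (1,3,1,\dots)$.

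With $\Delta h_Z = (1,3,1,\dots)$ in hand and $\ell(Z) \geq 5$, Theorem \ref{BGM} (applied with $i_0 = 2$, and $k \geq 1$ since $\Delta h_Z(2) = \Delta h_Z(3) = 1$, which holds because $h_Z(3) \leq \ell(Z)$ and the difference function cannot jump back up) yields that $\Delta h_Z(i) \leq 1$ for all $i \geq 2$ and that $Z$ contains a subset of $\ell(Z) - 1$ collinear points; the full $h$-vector is then $(1,3,1,1,\dots,1)$ with $\ell(Z)-3$ ones after the $3$. Thus all but (at most) one point of $Z$ lie on a line $L$; adjoining the one remaining point to $L$ gives a plane containing all of $Z$, which is the final assertion. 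The main obstacle I anticipate is the bookkeeping in the middle step: carefully justifying that ``pencil of quadric cones with general vertex'' forces exactly $h_Z(2) = 5$ (not just $\leq 5$, and not smaller in a way that would already put $Z$ in a line directly) --- this requires the infinitesimal Bertini argument and a clean dimension count on the incidence variety of (cone, vertex) pairs, exactly parallel to the argument in Proposition \ref{CC2} but with the pencil replacing the single cone. Everything after that is a routine invocation of Theorem \ref{BGM}.
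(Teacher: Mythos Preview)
Your approach has a structural problem that you partly sense but do not resolve. You aim to show that the $h$-vector of $Z$ begins $(1,3,1,\dots)$, i.e.\ $h_Z(1)=4$ and $h_Z(2)=5$, and then invoke Theorem~\ref{BGM}. But $h_Z(1)=4$ means $Z$ spans $\pp^3$, while the conclusion of the proposition is that $Z$ lies in a plane; these are incompatible. Concretely, for the configuration described in the conclusion (all but one point on a line $L$, one extra point $Q\notin L$) one computes $h_Z(1)=3$ and $h_Z(2)=4$, so the $h$-vector is $(1,2,1,1,\dots,1)$, not $(1,3,1,\dots)$. Thus the intermediate claim you are trying to establish is simply false under the hypotheses.

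One could try to rescue the strategy as a proof by contradiction (assume $Z$ non-degenerate, derive the $h$-vector, reach a contradiction). But two further gaps remain. First, the infinitesimal Bertini / tangent-space count you import from the proof of Proposition~\ref{CC2} requires the general member of the family of cones to have an \emph{isolated} double point at its vertex; here the cones in the pencil may all be pairs of planes, so that hypothesis is unavailable, and you never address this. Second, even granting $\Delta h_Z=(1,3,1,\dots,1)$, Theorem~\ref{BGM} yields only $\ell(Z)-2$ collinear points (exactly as in the proof of Proposition~\ref{CC2}), not $\ell(Z)-1$; a line together with two further points can perfectly well span $\pp^3$, so no contradiction ensues and the desired conclusion is not reached.

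The paper's argument is entirely different and avoids Hilbert functions. It projects $Z$ from a general $P$ to a plane: the pencil of quadric cones with vertex $P$ becomes a pencil of plane conics through $\pi_P(Z)$. One then uses the elementary fact that if $\geq 5$ points in $\pp^2$ lie on a pencil of conics, all but at most one are collinear. This gives a subset $Z'\subset Z$ with $\ell(Z')\geq \ell(Z)-1$ lying in a plane through $P$; varying $P$ and invoking continuity (the subset $Z'$ cannot jump), one sees $Z'$ lies in the intersection of two distinct planes, hence on a line. This ``project, then move the center'' argument is both shorter and free of the irreducibility and degeneracy issues above.
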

\begin{proof} For $P\in\pp^3$ general, the projection of $Z$ to a general plane lies in a pencil of conics. It is straightforward that
if a set of at least $5$ points in a plane lie in a pencil of conics, then all the points, except at most one of them, are collinear.
Thus, there exists a subset $Z'\subset Z$, of length at least $\ell(Z)-1$, which lies in a plane $\pi_P$ through $P$. Moving $P$ to a general
point $P'$ we get a new plane $\pi_P'$ through $P'$ which contains a subset  of at least $\ell(Z)-1$ points of $Z$. By continuity,
this subset is necessarily $Z'$. Thus $Z'$ is aligned.
\end{proof}


\section{On the property $\CC(3)$} \label{C(3) section}

In this section we analyze the structure of sets of points which satisfy the property $\CC(3)$. We will be able to do that only up to 
$\ell(Z)=9$. Our main goal is to understand, for a set of 9 points $Z \subset \mathbb P^3$, how the following properties are related:

\begin{itemize}

\item[(A)] $Z$ satisfies the unexpected cubic property $\CC(3)$;

\item[(B)] The general projection of $Z$ to a plane is a complete intersection of two cubic curves;

\item[(C)] $Z$ is a $(3,3)$-grid.

\end{itemize}

\begin{remark} \label{1st remarks}
We have some immediate observations. 

\begin{itemize}

\item[(a)] The implication (C) $\Rightarrow$ (A) is  a special case of Theorem \ref{gridex}. 

\item[(b)] If $Z$ is a $(3,3)$-grid it is clear that a general projection of $Z$ to a plane is  the complete intersection of the projection of the lines $L_i$ and the projection of the lines  $L_j'$, which shows that (C) $\Rightarrow$ (B). 

\item[(c)] Finally, it is also clear that if $Z$ imposes independent conditions on cubic surfaces then $\dim [I_Z]_3 - \binom{3+3-1}{3} = 1$ so (B) $\Rightarrow$ (A) in this situation.
\end{itemize}
\end{remark}

First, let us dispose of the case $\ell(Z)<9$.

Of course, since we already discussed property $\CC(2)$ in the previous section, \emph{we will always assume here that $Z$ does not satisfy
property $\CC(2)$}.

\begin{remark}\label{lZ7} 
If $Z$ satisfies property $\CC(3)$ and does not satisfy property $\CC(2)$, we claim that $\ell(Z)\geq 8$.
Indeed, suppose that $\ell(Z) \leq 7$. By Proposition~\ref{CC2}, the assumption that $Z$ does not satisfy $\CC(2)$ means that $Z$ does not consist of 3 points on one line together with either 3 or 4 points on a second line, disjoint from the first. 

Now assume that $Z$ satisfies $\CC(3)$ and consider the  projection $\pi_P(Z)$ of $Z$ from a general point to a general plane. By Proposition \ref{indep cond}, $\pi_P (Z)$ does not impose independent conditions on cubics. In particular, $\ell(\pi_P(Z)) = \ell(Z) \geq 5$. The following are the only possible $h$-vectors for $Z$:
{\small
\[
(1,1,1,1,1), \ \ (1,1,1,1,1,1), \ \ (1,1,1,1,1,1,1), \ \ (1,2,1,1,1), \ \ (1,2,1,1,1,1), \ \ (1,3,1,1,1).
\] }
In particular, by Theorem \ref{BGM}, $Z$ must contain a subset of at least five points on a line, plus at most two other points (possibly also on the same line). Of these, all but the last lie in a plane so by Lemma \ref{planar pts} they do not admit unexpected cones, contradicting $\CC(3)$. 

The last possibility above consists of a set of exactly five collinear points plus two more points, neither on the line, and this also describes the general projection to a plane. Then both $\pi_P(Z)$ and $Z$ fail by one to impose independent conditions on cubics. We have
\[
\dim [I_Z]_3 - 10 = 14 - 10 = 4
\]
and 
\[
\dim [I_Z \cap I_P^3]_3 = \dim [I_{\pi_P(Z)}]_3 = 4
\]
so there is no unexpected cubic cone here either.
\end{remark}

\begin{remark} \label{C3notC2}
We want to show that there do not exist 8 points satisfying property $\CC(3)$ but not property $\CC(2)$.
Assume that $\ell(Z)=8$, and $Z$ satisfies property $\CC(3)$ and it does not satisfy property $\CC(2)$. We claim that there would have to be a line $L$ which contains
exactly $5$ points of $Z$, and the remaining $3$ points of $Z$ span a plane (not containing $L$).

First assume that the line $L$ exists. Since $Z$ itself does not span a plane, if the $3$ points of $Z$ missed by $L$ are collinear, then  $Z$ lies in two skew lines. Thus, by Proposition \ref{CC2}, $Z$ satisfies property $\CC(2)$, a contradiction.

Now assume that $Z$ satisfies property $\CC(3)$ but does not satisfy property $\CC(2)$. 
Let $P$ be a general point in $\mathbb P^3$ and let $H$ be a general plane. Let $Z' = \pi_P(Z)$, the projection to $H$ from $P$. 
By Proposition \ref{indep cond}, $Z'$ does not impose independent conditions on cubics. This means that the $h$-vector of $Z'$ must be either  $(1,2,3,1,1)$ or $(1,2,2,2,1)$.

Suppose first that the $h$-vector of $Z'$ is $(1,2,3,1,1)$. 
By {Theorem \ref{BGM},} this implies that $5$ points of $Z'$  are contained in a line $L'$ and the remaining three are non-collinear and none of them lies on $L$.  Thus, a subset $Z_0$ of $5$ points of $Z$ lies in the plane $\pi$ spanned by $L',P$. Moving $P$, we find that for a general $Q$ a subset of $5$ points of $Z$ lies in a plane $\pi'$ passing through $Q$. By continuity, this subset is again $Z_0$. Thus $Z_0$ is contained in the intersection of two distinct planes, i.e. it lies in a line $L$. The remaining three points are non-collinear and none lies on the line of $Z_0$ (since the general projection contains five and not six collinear points).
But because one computes $\dim [I_Z]_3 = 13$ (and not 12) and $\dim [I_{Z'}]_3 = 3$, we see that the cubic cones are not unexpected, contradicting $\CC(3)$. 

It remains to consider the case where the $h$-vector of $Z'$ is $(1,2,2,2,1)$. If $Z'$ contains five collinear points then the remaining three points must lie on a line (since $Z'$ lies on a conic). This means, as above, that $Z$ contains 5 points on one line and 3 on a disjoint line, which means $Z$ satisfies $\CC(2)$. Contradiction. 

So $Z'$ does not contain five collinear points. Then it is not hard to see that $Z'$ is the complete intersection of a plane curve of degree 2 (possibly two lines) and a plane curve of degree 4. If the conic consists of two lines then $Z'$ is the union of two sets of four collinear points, and since the projection is general, $Z$ is the union of two sets of four points on each of two skew lines, so by Proposition \ref{CC2} it satisfies $\CC(2)$. 

We have only to show that the complete intersection of a smooth conic and a plane curve of degree 4 cannot occur in our situation. 
We have
\[
3 = \dim [I_{Z'}]_3 = \dim [I_Z \cap I_P^3]_3 .
\]
Since $Z$ satisfies $\CC(3)$, we must have $\dim [I_Z]_3 - 10 \leq 2$, so $\dim [I_Z]_3 \leq 12$. Since $h_Z(3) \leq 8$, we get $\dim [I_Z]_3 = 12$ and $Z$ imposes independent conditions on cubics. On the other hand, the general projection of $Z$ lies on a conic, and the cone over this conic contains $Z$, so moving $P$ we see that $\dim [I_Z]_2 \geq 4$. These two facts  mean that the $h$-vector of $Z$ is  $(1,3,2,2)$. This forces $Z$ to lie on a curve of degree 2, and since the projection lies on a smooth conic, $Z$ must itself lie on a conic. Then by Lemma \ref{planar pts} $Z$ does not have $\CC(3)$. Contradiction. 
\end{remark}

We  define an unexpected cone property $\CC(d)$ to be {\it improper} if the following two conditions both occur:

\begin{itemize}
\item[(a)]  the expected number of cones with general vertex $P$ passing through $Z$ is $0$, and

\item[(b)] $Z$ lies in a non-degenerate space curve of degree $d$. 

\end{itemize}

\noindent In this case, clearly, a general projection of $Z$ lies in a plane curve of degree $d$, so the corresponding cone with vertex $P$ is unexpected and  so
$Z$ satisfies $\CC(d)$ trivially. 

The previous discussion proves that the unexpected cone property $\CC(2)$ is always improper. 

\subsection{The $3\times 3$ case}

From now on, we take a set $Z$ of $9$ points in $\pp^3$ and we assume that $Z$ satisfies condition $\CC(3)$.
 It turns out that this forces condition $\CC(2)$ to fail (see Remark~\ref{C2 C3 incomp}).

\begin{lemma} \label{indepcond}
 If $Z$ fails to impose independent conditions on cubic surfaces then $Z$ fails to satisfy $\CC(3)$, and in addition the general projection of $Z$ to a plane fails to be a complete intersection.
 \end{lemma}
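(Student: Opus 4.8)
The plan is to argue by a dimension count, playing off the Hilbert function of $Z$ in degree $3$ against the dimension of cubics through a general projection. Set $h = h_Z(3)$, so that $\dim[I_Z]_3 = \binom{6}{3} - h = 20 - h$, and the hypothesis ``$Z$ fails to impose independent conditions on cubic surfaces'' means precisely $h < 9$, i.e. $h \le 8$. The key observation is that the ``expected'' dimension governing $\CC(3)$ is $\dim[I_Z]_3 - \binom{3+3-1}{3} = (20 - h) - 10 = 10 - h$, while the actual dimension of cones of degree $3$ through $Z$ with vertex at a general $P$ equals $\dim[I_Z \cap I_P^3]_3 = \dim[I_{\pi_P(Z)}]_3$ by the remark preceding Proposition \ref{indep cond}.

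First I would bound $\dim[I_{\pi_P(Z)}]_3$ from above. Since $\pi_P(Z)$ is a set of $\ell(Z) = 9$ points in the plane (the projection from a general point is injective on a general finite set), and since $h_{\pi_P(Z)}(3) = h_Z(3) = h$ — projection from a general point does not change the Hilbert function of a finite set, cf. the argument in Lemma \ref{planar pts} — we get
\[
\dim[I_{\pi_P(Z)}]_3 = \binom{3+2}{2} - h_{\pi_P(Z)}(3) = 10 - h.
\]
But $10 - h$ is exactly the expected value $\dim[I_Z]_3 - \binom{5}{3}$ computed above. Hence $\dim[I_Z \cap I_P^3]_3$ equals, rather than strictly exceeds, the right-hand side of the inequality defining $\CC(d)$ (note $10 - h \ge 2 > 0$ since $h \le 8$, so the max in that definition is attained by $\dim[I_Z]_d - \binom{m-1+n}{n}$ and not by $0$). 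Therefore $Z$ does not satisfy $\CC(3)$.

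For the second assertion, I would argue that if the general projection $\pi_P(Z)$ were a complete intersection of type $(a,b)$ with $ab = 9$, the only possibilities are $(1,9)$ and $(3,3)$; the case $(1,9)$ puts all nine points on a line, forcing $Z$ to be degenerate (by continuity, as in the proof of Proposition \ref{CC2}, the subset of collinear points is fixed as $P$ moves, hence lies on the intersection of distinct planes, hence on a line), contradicting non-degeneracy. In the $(3,3)$ case, a planar complete intersection of two cubics imposes independent conditions on cubics, so $h_{\pi_P(Z)}(3) = 9$, whence $h_Z(3) = 9$, contradicting the hypothesis $h \le 8$. So the projection cannot be a complete intersection. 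The main obstacle I anticipate is purely bookkeeping: making sure the Hilbert function of $\pi_P(Z)$ genuinely agrees with that of $Z$ in the relevant degree (true since a general projection of a finite set is an isomorphism onto its image and a general plane imposes no extra conditions), and checking that $10 - h > 0$ so that one is comparing against the correct branch of the maximum in the definition of unexpectedness; both are routine given the tools already assembled in Section \ref{background}.
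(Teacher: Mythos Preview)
Your argument hinges on the claim that $h_{\pi_P(Z)}(3) = h_Z(3)$ for a general projection, but this is exactly the statement you are trying to prove. Indeed, since $h_Z(3)\le 8$ we have $\dim[I_Z]_3 - 10 = 10 - h_Z(3) \ge 2 > 0$, so $\CC(3)$ fails precisely when
\[
\dim[I_{\pi_P(Z)}]_3 \;=\; \dim[I_Z\cap I_P^3]_3 \;\le\; \dim[I_Z]_3 - 10,
\]
i.e.\ when $10 - h_{\pi_P(Z)}(3) \le 10 - h_Z(3)$, i.e.\ when $h_{\pi_P(Z)}(3) \ge h_Z(3)$. The trivial inequality (a triple point imposes at most $10$ conditions) only gives $h_{\pi_P(Z)}(3) \le h_Z(3)$, the wrong direction. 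Your appeal to Lemma~\ref{planar pts} does not help: that lemma concerns \emph{planar} $Z$, where projection is a linear isomorphism; for non-degenerate $Z$ the Hilbert function genuinely can drop under projection --- a $(3,3)$-grid has $h_Z(3)=9$ but its projection (a planar $(3,3)$ complete intersection) has $h_{\pi_P(Z)}(3)=8$. This is why the paper instead enumerates the possible $h$-vectors of $Z$ with $h_Z(3)\le 8$, uses Theorem~\ref{BGM} to pin down the geometry in each case, and then computes $h_{\pi_P(Z)}(3)$ directly.

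Your second assertion also contains an error: a planar complete intersection of two cubics has $h$-vector $(1,2,3,2,1)$, so $h_{\pi_P(Z)}(3)=8$, not $9$. Thus even granting your (unproven) equality $h_{\pi_P(Z)}(3)=h_Z(3)$, you would only get $h_Z(3)=8$, which is compatible with the hypothesis and yields no contradiction. Ruling out the projection--to--CI property genuinely requires the case analysis (e.g.\ observing that in each case $Z$ contains too many collinear points for the projection to be a $(3,3)$ complete intersection).
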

 
\begin{proof}
Assume that our set of 9 points, $Z$, does not impose independent conditions on cubics.  We will continue to use the language of $h$-vectors, which for a zero-dimensional scheme $Z$ is the first difference of the Hilbert function of $Z$. The results of \cite{BGM94} that we use are in this language.

Indeed, if $Z$ does not impose independent conditions on cubics then the possible $h$-vectors for $Z$ are
\[
(1,3,3,1,1), (1,3,2,2,1), (1,3,2,1,1,1),  \hbox{ or } \ (1,3,1,1,1,1,1)
\]
Let $P, H, Z'$ be as  in Remark \ref{C3notC2}. Since $\CC(2)$ does not hold, by Proposition \ref{CC2} $Z$ is not the union of 5 points on one line and 4 on a second line, nor is it the union of 6 points on a line and 3 on a second line. 

If the $h$-vector of $Z$ is (1,3,3,1,1) then $Z$ contains five points on a line and four points not all on a line. This forces the $h$-vector of $Z'$ to be $(1,2,3,2,1)$. Then there is a pencil of cubics through $Z'$. On the other hand, $\dim [I_Z]_3 - 10 = 12 - 10 = 2$ so there are no unexpected cones and $\CC(3)$ does not hold. The collinear points preclude the projection from being the complete intersection of cubics.

If the $h$-vector of $Z$ is (1,3,2,2,1) then at least 8 points of $Z$ lie on a curve of degree~$2$ by \cite{BGM94} Theorem 3.6 (and an easy calculation). If that curve consists of two skew lines $L_1 \cup L_2$ then the length of the $h$-vector forces one of the following without loss of generality:

\begin{itemize}

\item[(i)] 5 points on $L_1$, 4 points on $L_2$;

\item[(ii)] 5 points on $L_1$, 3 points on $L_2$, one additional point;

\item[(iii)] 4 points on $L_1$, 4 points on $L_2$, one additional point;

\end{itemize}

\noindent Case (i) is ruled out since we assume $\CC(2)$ does not hold. Cases (ii) and (iii) do not have the desired $h$-vector. So in fact $Z$ consists of  $8$ points on a plane curve of degree $2$ together with one additional point. Then an analysis similar to that of the previous case continues to hold.

If the $h$-vector of $Z$ is (1,3,2,1,1,1) then $Z$ contains $6$ points on a line plus $3$ non-collinear points. The same analysis shows $\CC(3)$ does not hold and that the general projection is not a complete intersection.

If the $h$-vector of $Z$ is (1,3,1,1,1,1,1) then $Z$ consists of $7$ points on a line plus $2$ points. Computing the Hilbert function, we see that $\dim [I_Z]_3 = 20-6 = 14$ so a general point of multiplicity $3$ is expected to impose $\binom{3-1+3}{3} = 10$ conditions and we expect $\dim [I_Z \cap I_P^3]_3 = 4$. On the other hand, the projection $\pi_P(Z)$ from a general point $P$ has $h$-vector $(1,2,2,1,1,1,1)$ so $\dim [I_{\pi_P(Z)}]_3 = 4$, and so there is no {\em unexpected} cubic cone, i.e. $Z$ does not satisfy $\CC(3)$.
\end{proof}

\smallskip

Thus, we know that our set of 9 points, $Z$, lies neither in a plane, nor on two skew lines. Since $20-\ell(Z)=11$ and $Z$ imposes independent conditions, we expect to find a single
cubic cone through $Z$, with vertex at a general point $P\in \pp^3$. Thus $Z$ does not satisfy property $\CC(3)$ if and only if
there is (at least) a pencil of cubic cones through $Z$ with vertex at a general point $P\in\pp^3$.

\begin{remark} \label{C2 C3 incomp}
Proposition \ref{CC2} says (in particular) that a non-degenerate set of 9 points $Z$ in $\mathbb P^3$ satisfies property $\CC(2)$ if and only if it is contained in a pair of skew lines, each containing at least 3 points of $Z$. Note that any such set necessarily fails to impose independent conditions on cubic surfaces since one line must contain at least 5 points. Lemma \ref{indepcond} says that if $Z$ satisfies $\CC(3)$ then it {\em does} impose independent conditions on cubic surfaces. 
Thus a set of 9 points in $\mathbb P^3$ cannot simultaneously satisfy $\CC(2)$ and $\CC(3)$. 
In particular,  for sets of 9 points, $\CC(3)$ implies that $\CC(2)$ does {\em not} hold. 
Contrast this with Remark \ref{C3notC2}, where it was shown that for 8 points $\CC(3)$ {\em forces} $\CC(2)$.  
\end{remark}

\begin{proposition} \label{reducible}
Assume that $Z \subset \mathbb P^3$ consists of 9 points satisfying $\CC(3)$ and hence not $\CC(2)$. If the general cubic cone through $Z$ is reducible then $Z$ consists of 4 points on one line, 4 points on a second (disjoint) line, plus one more point.
\end{proposition}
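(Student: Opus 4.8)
\emph{Proof proposal.} The plan is to project to a general plane and classify the relevant linear systems of plane cubics. Write $Z'=\pi_P(Z)$ for the projection from a general point $P$. Since $\CC(3)$ holds, Lemma~\ref{indepcond} shows $Z$ imposes independent conditions on cubics, so $\dim[I_Z]_3=11$; since the difference function of $h_Z$ is an O-sequence beginning $(1,3,\dots)$ and summing to $9$ with $h_Z(3)=9$, it must be $(1,3,5)$, $(1,3,4,1)$ or $(1,3,3,2)$, and in particular $h_Z(2)\ge 7$. If $\mu$ points of $Z$ lie on a line $\lambda$, then (as $\mu\ge 3$) every quadric through them contains $\lambda$, so $\dim[I_Z]_2\ge \dim[I_\lambda]_2-(9-\mu)=7-(9-\mu)=\mu-2$, whence $h_Z(2)\le 12-\mu$ and therefore $\mu\le 5$: \emph{at most five points of $Z$ are collinear}. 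The same bound holds for $Z'$; more precisely, for general $P$ no new collinearities are created (a fixed non-collinear coplanar subset of $Z$ cannot be coplanar with a general $P$), so any $k\ge 3$ collinear points of $Z'$ are the projection of $k$ collinear points of $Z$.

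Set $\mathcal L=[I_{Z'}]_3$. By $\CC(3)$, the identification $\dim[I_Z\cap I_P^3]_3=\dim[I_{Z'}]_3$, and $\dim[I_Z]_3=11$, we get $\dim\mathcal L\ge 2$, and by hypothesis the general member of $\mathcal L$ is reducible. By Bertini's theorem (characteristic $0$) such a linear system of plane curves has a fixed component or is composed with a pencil; a system of cubics composed with a pencil either has a fixed line or has all its members equal to a union of three lines through a common point $O$, in which latter case the base locus lies in $\{O\}$, which cannot contain the nine points of $Z'$. So $\mathcal L$ has a fixed component $D$ with $\deg D\in\{1,2\}$; writing $W=Z'\setminus D$ and cancelling the fixed part, $\dim[I_W]_{3-\deg D}\ge 2$.

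If $\deg D=2$ then $\dim[I_W]_1\ge 2$ forces $|W|\le 1$, so at least $8$ of the $9$ points lie on $D$. If $D$ were irreducible, the corresponding subset $Z_0$ of at least $8$ points of $Z$ would lie on a quadric cone with general vertex; since $\dim[I_{Z_0}]_2\le 10-(h_Z(2)-1)\le 4$, the expected number of quadric cones through $Z_0$ with general vertex is $0$, so $Z_0$ (if non-degenerate) satisfies $\CC(2)$ and hence lies on two skew lines by Proposition~\ref{CC2}, contradicting that it projects into an irreducible conic — and if $Z_0$ were degenerate, $Z$ would consist of at least $8$ coplanar points plus at most one more, which a direct computation shows fails to impose independent conditions on cubics. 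So $D=\ell_1\cup\ell_2$ is a union of two distinct lines, and since at most five points of $Z'$ lie on a line and any split of the $\ge 8$ points on $D$ involving a line with five of them would put five collinear points in $Z$ (forcing $h_Z(3)\le 8$), the split is $4$ on each, with $|W|=1$. If instead $\deg D=1$, say $D=\ell$, then $\dim[I_W]_2\ge 2$ with $|W|=9-|Z'\cap\ell|\ge 4$; the condition $h_W(2)\le 4$, checked against $|W|$ and $\mu\le 5$, forces either $|W|=4$ (so five points of $Z'$ on $\ell$, excluded as before) or $|W|=5$ with $W$ a set of four collinear points plus one more. In every surviving case $Z'$ consists of four points on one line, four points on a second (distinct) line, and one further point off both lines.

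Finally I would lift this to $Z$. By the collinearity transfer from the first paragraph, together with a continuity/specialization argument (the two four-point subsets of $Z'$ cut out by the two lines are, as an unordered pair, independent of the general $P$), $Z=Z_1\cup Z_2\cup\{R\}$ with $|Z_i|=4$, each $Z_i$ collinear on a line $L_i$, $L_1\ne L_2$, and $R\notin L_1\cup L_2$. If $L_1$ and $L_2$ were coplanar, $Z$ would be $8$ coplanar points together with one more; a direct computation — every cubic surface through the eight points restricts on their plane to the conic $L_1\cup L_2$ times a line, giving $\dim[I_Z]_3=12$, $h_Z(3)=8$ — then contradicts that $Z$ imposes independent conditions on cubics. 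Hence $L_1$ and $L_2$ are skew, which is the assertion. I expect the main work to be the Bertini reduction (excluding the ``composed with a pencil'' alternative) and the bookkeeping in the $\deg D=1$ case; once $\mu\le 5$ and the independence of conditions on cubics are established, the rest is forced, with Proposition~\ref{CC2} supplying the remaining geometric input.
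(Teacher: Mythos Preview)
Your proof is correct and reaches the same conclusion, but it takes a genuinely different route from the paper's argument. The paper works directly in $\mathbb P^3$: observing that any reducible cubic cone contains a plane $\Pi_P$ through its vertex $P$, it uses a continuity argument (moving $P$ generically) to show that the subset $Z_0 = Z \cap \Pi_P$ is in fact collinear and independent of $P$, and then performs a short case analysis on $|Z_0| \in \{0,1,2,3,4\}$, invoking Proposition~\ref{2pen} (pencils of quadric cones with general vertex) on the residual $Z \setminus Z_0$. You instead project immediately to the plane and apply Bertini's irreducibility theorem to force a fixed component $D$ in the linear system $[I_{Z'}]_3$, after ruling out the ``composed with a pencil'' alternative; your case analysis is then on $\deg D$ and on whether $D$ is irreducible, driven by the Hilbert-function bounds you establish first ($h_Z(2)\ge 7$, hence no five collinear points) together with Proposition~\ref{CC2}. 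What your approach buys: it is a clean reduction to standard facts about linear systems of plane curves, and the $h$-vector bookkeeping makes the exclusion of bad configurations transparent. What the paper's approach buys: it avoids Bertini entirely (and the small care needed to exclude the composed-with-a-pencil case), and the continuity step is elementary. The two arguments are in close correspondence --- your $\deg D = 1$ case mirrors the paper's $|Z_0|\ge 2$ analysis, and your $\deg D = 2$ case corresponds to the paper's $|Z_0|\le 1$ --- so either could be substituted for the other without affecting the downstream use of the proposition.
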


\begin{proof}
We can assume, by Lemma \ref{indepcond}, that $Z$ imposes independent conditions on cubic surfaces, so $\CC(3)$ means that we have a pencil (or more) of cubics containing $Z$. In particular, $Z$ does not contain a subset of $\geq 5$ points on a line, or 8 points on a plane curve of degree 2, and $Z$ itself cannot consist of 9 points on a set of two skew lines in $\mathbb P^3$.

Suppose that the general cubic cone through $Z$ is reducible. 
Any reducible cubic cone contains a plane passing through the vertex. Fix  $P\in\pp^3$ general, fix a general
cubic cone with vertex at $P$ containing $Z$,  and call $\Pi_P$ a plane through $P$ contained in the cone. $\Pi_P$ contains a subset $Z_0$ of $Z$. 
If we move $P$ generically to a point $P'$, the plane $\Pi_P$ moves to another plane $\Pi_P'$ through $P'$ which,
by continuity, still contains $Z_0$. Thus $Z_0$ lies on a line.
If $\ell(Z_0)\geq 5$,  we have a contradiction (there are at least 5 points on a line), so assume $\ell(Z_0)\leq 4$.

If $2 \leq \ell(Z_0) \leq 4$ then the plane $\Pi_P$ is fixed, and there is at least a pencil of quadric cones through $Z' = Z \backslash Z_0$ with vertex $P$, and we have $5 \leq \ell(Z') \leq 7$. This means we have at least a pencil of conics through $\pi_P (Z')$, so $\pi_P(Z')$ must consist of $\ell(Z') -1$ points on a line plus one other point, or else $\ell(Z')$ points on a line. Since $\ell(Z') \geq 5$, but $Z$ does not contain 5 points on a line, we must have $\ell(Z') = 5$ and $\pi_P(Z')$ is a set of 4 points on a line plus one other point. Since this is a general projection, we conclude that $Z$ consists of $4$ points on one line (namely $Z_0$), $4$ points on a second line disjoint from the first, plus one more point. 

Suppose $\ell(Z_0) = 1$. Then there is a pencil of planes containing $Z_0$ and the general point $P$. The general projection of $Z'$ to a general plane must then be a set of $8$ points on a conic. Since $Z$ does not contain a subset of $8$ points on a conic, we must again have that $Z'$ consists of two sets of four collinear points plus one point ($Z_0$). 

Finally, suppose $Z_0$ is empty, i.e. all the points of $Z$ lie on a quadric cone and the plane is any plane through the general point. This forces $Z$ to lie on two skew lines (since it cannot lie on a conic), violating the fact that $Z$ does not contain a subset of $5$ collinear points.
\end{proof}

\begin{remark} \label{assume indep}
Thanks to Proposition \ref{reducible}, it is now enough to consider the case that a general cubic cone containing $Z$ is irreducible. It follows that the projection of $Z$ from a general point $P$ 
to a general plane is a set of $9$ points which are contained in a pencil of cubic curves, whose general element is irreducible. Such a set
is necessarily complete intersection of two cubic curves.

Thus, {\it we assume from now on that the general projection of
$Z$ to a plane $H$ is a complete intersection of two cubic curves in $H$.}

Furthermore, by Lemma \ref{indepcond} {\em we can also assume that $Z$ imposes independent conditions on cubic surfaces in $\mathbb P^3$.}
\end{remark}

\begin{remark} \label{semicont} We are assuming that a general projection of $Z$ lies in a pencil of cubic curves, giving a complete
intersection. This is no longer obvious for \emph{special} projections of $Z$. In any case, by semicontinuity,
even a special projection of $Z$ is contained in a family of cubic curves of (projective) dimension at least $1$. 
\end{remark}

\begin{remark} \label{no4all}
Assume that $Z$ contains $4$ collinear points. Then a general projection of $Z$ cannot be a complete
intersection of two cubic curves. Namely, the projection too has at least $4$ points on a line $L$, so that any cubic curve
containing the projection will contain~$L$.

Similarly, assume that $Z$ contains $7$ points on an irreducible conic. Then a general projection of $Z$ cannot be a complete
intersection of two cubic curves. Namely, the projection too has at least $8$ points on an irreducible conic $\Gamma$, so that any cubic curve
containing the projetion will contain $\Gamma$.
\end{remark}

\begin{remark}
We have seen in Remark \ref{assume indep} that we may assume that $Z$ imposes independent conditions on cubics. Then, as we have seen in Remark \ref{1st remarks}, the property that a general projection is a complete intersection of two cubics implies the unexpected cone property $\CC(3)$.
\end{remark}

We know from Theorem \ref{gridex} that any $(3,3)$-grid satisfies property $\CC(3)$. We have also seen in Remark \ref{C2 C3 incomp}  that $Z$ cannot simultaneously have $\CC(3)$ and $\CC(2)$. The aim of the rest of this section is to prove that sets $Z$ of $9$ points in $\pp^3$ which satisfy property $\CC(3)$  and are not of the type described in Proposition \ref{reducible}   are $(3,3)$-grids. In particular, we can assume that the general projection to a plane is the complete intersection of two cubics.

\begin{remark}\label{no6conic}
Assume  that $Z$ is a set of $9$ point which  satisfies property $\CC(3)$, and assume that  no $3$ points of $Z$ are collinear.
Then, a general projection of $Z$ cannot contain three collinear points. Otherwise, we get that for a general
 $P$ there are three points of $Z$ which span a plane, together with $P$. This is possible only if three points
 of $Z$ are collinear.  
 
 Similarly, a general projection of $Z$ cannot contain $6$ points on a conic. Indeed otherwise, since the general
 projection does not contain three collinear points, then the conic containing six points of a general projection
 is unique and irreducible. Since the projection is a complete intersection of two cubics, by Cayley-Bacharach \cite{EGH}
 the remaining three points of the projection of $Z$ are collinear, a contradiction. 
\end{remark}

\begin{theorem} \label{proj CI33} Let $Z\subset \pp^3$ be a non-degenerate set of $9$ points 
such that the general
projection of $Z$ to a plane is a complete intersection of two cubic curves. Then $Z$ is a $(3,3)$-grid.
\end{theorem}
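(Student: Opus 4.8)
\noindent The plan is to show first that $Z$ lies on a smooth quadric $Q$, then that the hypothesis forces $Z$ to be three points on each of three lines of $Q$, and finally that it is a $(3,3)$-grid. Throughout I use the reductions already in place: $Z$ imposes independent conditions on cubics (Lemma \ref{indepcond}), contains no $4$ collinear points and no $7$ points on an irreducible conic (Remark \ref{no4all}), the general cubic cone through $Z$ is irreducible (Remark \ref{assume indep}), and, by Remark \ref{no6conic}, a general projection $\bar Z=\pi_P(Z)$ has no $3$ collinear points (unless $Z$ does) and no $6$ points on a conic. Since $\ell(Z)=9<\binom52=\dim[R]_2$, already $\dim[I_Z]_2\ge1$, so some quadric contains $Z$; and if $Z$ lay only on reducible quadrics or quadric cones, a moving--vertex / moving--plane argument (in the spirit of the proof of Proposition \ref{CC2}) would force $\ge5$ collinear points, or $\ge7$ points on a plane conic, or too many coplanar points on $Z$, so that $\bar Z$ inherits a line or conic through which all cubics must pass and hence is not a $(3,3)$ complete intersection (cf. Remark \ref{no4all}). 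Fix, then, a smooth quadric $Q\supset Z$ with rulings $\mathcal A,\mathcal B$; by Remark \ref{no4all} no line of $\mathcal A$ or $\mathcal B$ carries more than $3$ points of $Z$.

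\smallskip

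First I would show $Z$ has a collinear triple: if not, every ruling line carries at most $2$ points of $Z$, and one checks that the $h$-vector of $\bar Z$ then cannot be $(1,2,3,2,1)$, contradicting that $\bar Z$ is a $(3,3)$ complete intersection. So choose a line $L\subset Q$ meeting $Z$ in exactly $3$ points and set $W=Z\setminus(Z\cap L)$, $\ell(W)=6$. Since $\bar Z$ is a $(3,3)$ complete intersection and $\overline{Z\cap L}$ is a collinear triple, the Cayley--Bacharach theorem (\cite{EGH}), dual to the implication used in Remark \ref{no6conic}, shows that $\pi_P(W)$ lies on a conic for every general $P$, i.e. $W$ lies on a quadric cone with general vertex. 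Either $W$ then satisfies $\CC(2)$, so by Proposition \ref{CC2} it lies on two skew lines with $3$ points each --- necessarily lines of $Q$, as they carry $3$ points of $Z$; or $\dim[I_W]_2\ge 5$, in which case Theorem \ref{BGM}, together with Lemma \ref{planar pts} and a short postulation count to eliminate the planar sub-case with an irreducible conic, again splits $W$ as $3+3$ on two lines of $Q$. Iterating this --- and discarding along the way the configurations ``$3$ collinear, plus $3$ collinear, plus $3$ non-collinear'' and ``$3$ collinear, plus $6$ on an irreducible plane conic'', for which a direct computation gives $\dim[I_{\bar Z}]_3=1$, so that $\bar Z$ is not a $(3,3)$ complete intersection --- we reach: $Z$ consists of $3$ points on each of three distinct lines $L_1,L_2,L_3$ of $Q$, the nine points being distinct, so that the $L_i$ share no point of $Z$. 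Since at least two of the $L_i$ lie in the same ruling, they are skew; it will emerge below that all three lie in one ruling.

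\smallskip

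The \emph{core step} is then: such a $Z$ is a $(3,3)$-grid. Suppose some line $M$ of $Q$ meets each of $L_1,L_2,L_3$ and contains $3$ points of $Z$; then necessarily all three $L_i$ lie in one ruling and $M$ in the other, $Z\cap M$ is one point on each $L_i$, and $Z\setminus(Z\cap M)$ is $2+2+2$ on $L_1,L_2,L_3$, hence lies in no plane (two of the $L_i$ being skew). Running the Cayley--Bacharach / $\CC(2)$ argument of the previous step with the triple on $\bar M$ then puts this residual set of $6$ points onto a further pair of lines of $Q$ with $3$ points each, necessarily again opposite to the $L_i$ (no line of the ruling of the $L_i$ carries $3$ of those $6$ points). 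Three lines of the opposite ruling, with $3$ points of $Z$ each, covering $Z$: this is precisely a $(3,3)$-grid. It remains to produce $M$: if no line of $Q$ met all the $L_i$ in $3$ points of $Z$, then $\bar Z$ --- a $3+3+3$ configuration on three lines $\bar L_1,\bar L_2,\bar L_3$ in general position, whose pencil of cubics $[I_{\bar Z}]_3$ contains the triangle $\bar L_1\bar L_2\bar L_3$ --- would be a $(3,3)$ complete intersection for \emph{every} general centre $P$. Pulling the pencil back by $\pi_P$ (the cone over the triangle from $P$ being the union of the planes $\langle P,L_i\rangle$) and analysing it on a smooth member via the group law --- three points of a plane cubic being collinear exactly when they sum to the hyperplane class --- one shows that the compatibility of this structure for all general $P$ forces a \emph{second} totally reducible member of the pencil. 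Since a plane through the general point $P$ can contain only a collinear subset of $Z$, this second member pulls back to three planes cutting $Z$ in three collinear triples, which lie on lines of $Q$ meeting each $L_i$; these carry $3$ points of $Z$ each, contradicting the assumption and (once more) exhibiting the grid.

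\smallskip

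The main obstacle is the claim in the last paragraph. For a single generic $P$ the pencil $[I_{\bar Z}]_3$ need not contain a second reducible cubic --- as a configuration, ``$3+3+3$ on three lines and a $(3,3)$ complete intersection'' need not be a grid --- so grid-ness cannot be read off from the projection being a complete intersection for one $P$; the argument must exploit the rigidity that comes from this holding for all general $P$ simultaneously, which is what finally pins $Z$ down. The secondary, more routine, obstacles are the postulation and $h$-vector bookkeeping needed to exclude the singular-quadric case and the ``no collinear triple'' case.
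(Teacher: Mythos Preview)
Your skeleton matches the paper's: reduce to ``$Z$ has a collinear triple,'' peel off the triple, show the residual six lie on two skew lines with three each, and finally prove the resulting $3{+}3{+}3$ configuration on three skew lines of a smooth quadric is a grid. The Cayley--Bacharach step linking the collinear triple to six points on a conic, and the appeal to Proposition~\ref{CC2} for the residual, are exactly what the paper does. But two of your steps are genuine gaps, and the paper fills them by entirely different, more concrete arguments.

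\textbf{The ``no collinear triple'' step.} You assert that if no ruling line of $Q$ carries three points of $Z$, then ``one checks that the $h$-vector of $\bar Z$ cannot be $(1,2,3,2,1)$.'' This is not a check: nine base points of a generic pencil of plane cubics have no three collinear, so for a \emph{single} centre $P$ there is nothing to prevent $\bar Z$ from being a $(3,3)$ complete intersection with no collinear triple. You would again need a rigidity argument over all general $P$, and you do not supply one. The paper proves the existence of a collinear triple without ever putting $Z$ on a quadric: it assumes no three collinear, splits into the linear--general--position case (where Castelnuovo's Lemma produces a twisted cubic through six points, and projection from a general point \emph{of that twisted cubic} puts six of $\bar Z$ on a conic, contradicting Cayley--Bacharach via a subcase analysis on how many of the nine lie on the cubic) and the case of four coplanar points (handled by projecting from the relevant plane). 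These special projections, not a postulation count, are what make the argument go.

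\textbf{The core step.} You acknowledge this is the main obstacle, and your group-law sketch does not close it. Saying that ``compatibility for all general $P$ forces a second totally reducible member of the pencil'' is a statement, not an argument; nothing you write explains why the moving pencil must acquire a second triangle. The paper's device here is short and decisive, and you should use it instead: once $Z$ sits as $3{+}3{+}3$ on skew lines $L,L',L''\subset S$, take the plane $H=\langle L,P_4\rangle$ with $P_4\in L'$ and project from a general $Q\in H$. Then $\pi_Q(Z)$ has four points on the line $\pi_Q(H)$, so every cubic through $\pi_Q(Z)$ contains that line; semicontinuity (Remark~\ref{semicont}) still gives a pencil of cubics, hence a pencil of residual conics through the remaining five projected points, forcing four of them collinear --- impossible for general $Q\in H$ unless $H$ already contains a further point of $Z$, necessarily on $L''$. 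Intersecting $H$ with $S$ then shows $P_4$ and that point of $L''$ lie on a line of the opposite ruling, and iterating produces the three transversal lines. This replaces your group-law paragraph entirely. (Your preliminary move of fixing a smooth quadric up front is also only loosely justified; the paper never needs it, since the three skew lines emerging from the $3{+}3{+}3$ decomposition determine the quadric at the end.)
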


\begin{proof} 

In this proof, on more than one occasion we will refer to the fact that six points on a conic must be linked, in a complete intersection of cubics in the plane, to three collinear points. This is well known, and is a variant of the Cayley-Bacharach theorem \cite{EGH}.

We know from Lemma \ref{indepcond} that if $Z$ projects to the complete intersection of two cubics in the plane then $Z$ imposes independent conditions on cubic surfaces. Then Remark \ref{1st remarks} (c) gives us that $Z$ satisfies the unexpected cone property $\CC(3)$. Thus we can also use this fact about $Z$. Notice also that certainly no four points of $Z$ are collinear, since then the projection would have this property and then could not be a complete intersection of cubics. 

\medskip

\noindent 
{\it Claim:} Necessarily  $3$ points of $Z$ are collinear.

\medskip 

Let us argue by contradiction. 

{ \bf Assume first that no $3$ points of $Z$ are collinear and no $4$ points of $Z$ are coplanar (so that $Z$ is in 
 linear general position).} 
 
 In the different parts of this proof we will consider different choices of a subset $Y$ of $Z$. Similarly, in different parts of the proof we will consider different centers of projection, $P$, to a general plane.
 
 We start with a subset $Y$ of  exactly $6$ points of $Z$.
 It follows from  Castelnuovo's Lemma  (see for instance \cite{harris} Theorem 1.18) that $Y$ is contained in a
(unique) irreducible rational cubic curve $C$. It may or may not happen that other points of $Z$ also lie on $C$. Let $P$ be a general point of $C$. Clearly $P$ avoids the secant lines joining points of $Z$, and recalling that $\ell(Y) = 6$, we also get that $P$ also avoids the plane of $Z' := Z \backslash Y$. Thus the projection $\pi_P$ from $P$ to a general plane gives at least $6$ points on the conic $\pi_P(C)$, {\it and the remaining points are in general linear position in the plane.}

Recall that by Remark \ref{semicont}, for any projection $\pi_P(Z)$ must lie on at least a pencil of cubics, whether they define a complete intersection or not. Now consider a subset $Y$ of {\it at least} $6$ points, lying on a twisted cubic. We consider cases.

\begin{enumerate}

\item $\ell(Y) = 6$. Then $\pi_P(Z')$ is a set of three non-collinear points. But $6$ points on a conic are linked by two cubics to $3$ collinear points. Contradiction.

\item $\ell(Y) = 7$. The cubics through $\pi_P(Z)$ all have the conic $\pi_P(C)$ as a component by Bezout's theorem. But we have two points off of $\pi_P(C)$, so there cannot be a pencil of cubics through $\pi_P(Z)$. Contradiction.

\item $\ell(Y) = 8$, so $Z$ consists of $8$ points on a twisted cubic plus one point off the twisted cubic. Now instead, as above, let $P$ be a general point in $\mathbb P^3$ and $\pi_P$ the projection to a general plane. Then $\pi_P(Z)$ contains $8$ points on the cubic curve $\pi_P(C)$ and one point off this curve. But if $\pi_P(Z)$ were a complete intersection of cubics, any cubic curve through $8$ of the points has to pass through the ninth, by the classical Cayley-Bacharach theorem. Contradiction.

\item $\ell(Y) = 9$, so $Z$ is a set of $9$ distinct points on a twisted cubic curve $C$. Let us label the $9$ points of $Z$ as $Q_1,Q_2,Q_3,R_1,R_2,R_3, S_1,S_2,S_3$ and by $\Pi_Q, \Pi_R, \Pi_S$ the corresponding planes spanned by the obvious triples of points. Relabelling if necessary, we can assume that $\Pi_Q, \Pi_R, \Pi_S$ do not share a line.
Let $\lambda = \Pi_Q \cap \Pi_R$ be the line of intersection of the first two planes. Let $P$ be a general point of $\lambda$ (thus $P$ is not on $\Pi_S$) and $\pi_P$ the projection from $P$ to a general plane. Then $\pi_P(Q_1 \cup Q_2 \cup Q_3)$ and $\pi_P(R_1 \cup R_2 \cup R_3)$ are  each sets of three collinear points, while $\pi_P(S_1 \cup S_2 \cup S_3)$ are three non-collinear points. But then $\pi_P(Q_1 \cup Q_2 \cup Q_3 \cup R_1 \cup R_2 \cup R_3)$ is a set of six points on a conic (the union of two lines). We know that $\pi_P(Z)$ lies on an irreducible cubic curve, namely $\pi_P (C)$. If there is another cubic containing $\pi_P(Z)$, it then defines a complete intersection of cubics. But  in such a complete intersection, the residual of six points on a conic is a set of three collinear points, contradicting the fact that $\pi_P(S_1 \cup S_2 \cup S_3)$ are three non-collinear points. Hence $\pi_P(Z)$ lies on a unique cubic curve, contradicting Remark~\ref{semicont}.

\end{enumerate}

{\bf With this we can now assume that $Z$ is not in linear general position}, but we still assume that no three points are on a line. Thus there are at least four points on a plane, $H_0$. 
Set  $Y_1 = H_0 \cap Z$ and call $\ell $ the cardinality of $Y_1$, $4 \leq \ell\leq 8$. We may assume that $Y_1 = \{ P_1, \dots , P_\ell \}$, $Y_2 = \{ P_{\ell+1},\dots,P_9\}$ and $Z=Y_1\cup Y_2$.

Again we consider cases.
 
\begin{enumerate}

\item Assume  $\ell = 4$. Let $P$ be a general point of $H_0$ and let $\pi_P$ be the projection from $P$ to a general plane $H$. Notice that $\pi_P(Y_1)$ is a set of $\ell = 4$ collinear points, so any cubic containing $\pi_P(Z)$ has the line spanned by $\pi_P(Y_1)$ as a component. Hence the five remaining points $\pi_P(Y_2)$ lie on a pencil of conics in $H$. 
Then $\pi_P (Y_2)$ contains at least 4 points on a line, which is impossible since $Z$ does not contain three collinear points and $P$ is general in $H_0$. 

\item If $\ell = 5$ we consider two cases. Suppose the four points of $Y_2$ also lie on a plane. Then in effect this was handled in the first case. So we can assume that $Y_2$ does not lie on a plane. Let $C$ be the conic containing $Y_1$ and let  $S$ be the quadric cone over $C$ with vertex $P_6$. Let $H$ be the plane spanned by $P_7, P_8, P_9$. Let $D = S \cap H$. Since $Y_2$ does not lie on a plane, $D$ does not pass through the vertex $P_6$ of $S$, so $D$ is a smooth conic. Let $Q$ be a general point of $S$ not on $D$. Let $\pi_Q$ be the projection from $Q$. Then $\pi_Q(P_6)$ lies on $C$ (since $Q \in S$) but the points of $\pi_Q(P_7 \cup P_8 \cup P_9)$ are not collinear (since $Q \notin H$). Contradiction.

\item If $\ell = 6$ then $Y_1$ is a set of $6$ points in linear general position in $H_0$, not on a conic (Remark \ref{no6conic}) and $Y_2$ is a set of $3$ points not lying on $H_0$ and not collinear. Let $H'$ be the plane spanned by $Y_2$ and let $Q$ be a general point on $H'$. Let $\pi_Q$ be the projection from $Q$ to $H_0$. Note that $\pi_Q(Y_2)$ is a set of three  collinear points. In this construction the pencil of cubics containing $\pi_Q(Z)$ is a complete intersection, which links three collinear points to a set of six points not on a conic.  Contradiction.

\item If $\ell = 7, 8$, $Y_1$ is a set of 7 points on $H_0$ in linear general position, no $6$ on a conic. Then by considering the plane $H'$ spanned by $P_7, P_8, P_9$, the same argument as in the case $\ell = 6$ works.
\end{enumerate}

This ends the proof of the claim.  
 
So assume that $Y_1 = \{P_1,P_2,P_3 \}$ is a set of collinear points in $Z$, in the line $L$, and let $Y_2$ be the residual, $Y_2 = \{ P_4,\dots,P_9 \}$. Let $\pi_P$ be the projection from a general point $P$ to a general plane. Since $\pi_P(Z)$ is the complete intersection of two cubics, $Z$ (and hence $Y_2$) does not contain a set of four collinear points, and similarly $Y_2$ cannot contain a point of $L$. 
Thus $\pi_P(Y_1)$ is a set of 3 collinear points and $\pi_P(Y_2)$ is a set of six points on a conic (possibly reducible). 
Then $Y_2$ is contained in a quadric cone with vertex at a general point. 
Looking at the proof of Proposition \ref{CC2}, this is enough to conclude that $Y_2$ consists of six points lying on two  lines, $L', L''$. Since $Z$ does not have four points on a line, $Y_2$ has three points on $L'$ and three on $L''$.

Proposition \ref{CC2} assumed that the points are non-degenerate to conclude that the lines are skew, but in our situation we have to rule out the possibility that the two lines meet in a point. By interchanging the roles of the points, and recalling that $Z$ does not lie on a plane, the only possibility is that the three lines $L, L', L''$ meet in a point. Such a set of lines does not have any trisecant lines. Taking one point of $Z$ on each line we get a spanned plane $H_1$, and repeating we get a plane $H_2$. These have a line, $\lambda$, of intersection. Projecting from a general point on $\lambda$ to a general plane gives two points on each of two lines plus three non-collinear points, and such a set lies on a unique cubic curve. So we obtain that $L, L', L''$ are skew lines.
 
 Call $S$ the unique smooth quadric surface that contains
 the three lines. $L,L',L''$ determine one ruling of $S$. 
 
 Consider the plane $H$ spanned by $L,P_4$. We claim that $H$ meets $L''$ in a point of $Z$. Indeed the
 projection  $\pi_Q$ from a general point $Q\in H$ to a general plane contains four points on the line $\pi_Q(L)$. Thus any cubic  containing the projection must contain $\pi_Q(L)$. Since $\pi_Q(Z)$ sits in a pencil of cubics and the
 projection of the set $Z'=\{P_5,\dots,P_9\}$ cannot contain four collinear points, we get that $\pi_Q(L)$ must contain at least one more point of $Z$, i.e. some point of
 $Z'$ lies in $\pi_Q(L)$. This is impossible, for $Q$ general in $H$, for $\pi_Q(P_5),\pi_Q(P_6)$, so we get our claim. 
 
 Say that $P_7\in H$. Then the intersection of $H$ and $S$ contains the line $P_4,P_7$.
 
 Now, repeat the previous construction with the plane $H_2$ spanned by by $P_4$ and $L''$. We find that one point of $Z\cap L$,
 say $P_1$, is the intersection of $H$ and $L$. Since $H$ meets $S$ in $L''$ and the line $P_4,P_7$,
 it follows that $P_1,P_4,P_7$ are collinear, their line being one element of the
 ruling of $S$ opposite to the ruling determined by $L,L',L''$.
 
 Now, consider the plane $H_3$ spanned by $L,P_5$, repeat the construction. We see that, after renumbering,
the points $P_2,P_5,P_8$ are collinear, their line being one element of the
 ruling of $S$ opposite to the ruling determined by $L,L',L''$. If we consider now the plane $H_4$ spanned by $L,P_6$,
 we see that also $P_3,P_6,P_9$ are collinear, their line being one element of the
 ruling of $S$ opposite to the ruling determined by $L,L',L''$.
 
 In conclusion, $Z$ is a $(3,3)$-grid.
 \end{proof}


\section{Further geometric considerations} \label{geom section}

At the beginning of the previous section we mentioned that for a set $Z$ of $9$ points, the property $\CC(3)$ is  connected with the property that
a general projection of $V$ to a plane is a complete intersection (of two cubics).

In general, the relations between $\CC(d)$ and a \emph{projection to complete intersection} property are subtle. We begin by exploring this situation.

\begin{remark}
It is only a matter of computation to realize that, for any $d\geq 3$, if a set $Z$ of $d^2$ points in $\pp^3$, {\em imposing
independent conditions to surfaces of degree $d$}, has general projection which is
a complete intersection of type $(d,d)$, then it satisfies the unexpected cone property $\CC(d)$. 
\end{remark}

About the relation, more can be said. Indeed, for sets projecting to complete intersections the difference between the virtual and
the actual dimensions of the family of cones with general vertex is rather big.

\begin{definition}
The \emph{unexpected cone $d$-defect $\delta \CC(d)$} of $Z$ is the difference between the (virtual) expected dimension
of the family of cones with general vertex and its actual dimension, i.e.
 \[
 \delta \CC(d) = \dim [I_Z \cap I_P^d]_d - \dim [I_Z]_d + \binom{d+2}{3}.
 \]
\end{definition}

\smallskip

For instance, let us consider in detail what happens in the case $d=4$.
Sets $Z$ of $16$ points satisfying $\CC(4)$ and not $\CC(3)$, such that a general projection of $Z$ is a complete intersection
of type $(4,4)$ exist: just take $Z$ to be a $(4,4)$-grid. In fact, we will see in the appendix that there are also non-grid examples of such sets $Z$. 

\begin{proposition}
Let $Z$ be a set of $16$ points in $\mathbb P^3$ satisfying $\CC(4)$ but not $\CC(3)$. Then
$Z$ satisfies $\delta \CC(4) \leq 3$.
\end{proposition}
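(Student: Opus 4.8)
The strategy is to bound $\delta \CC(4)$ by controlling the Hilbert function of $Z$ in degree $4$, and then feeding that into the definition of the defect together with the value of $\dim [I_Z \cap I_P^4]_4$ coming from the projection property. First I would note that since $Z$ has a general projection which is a complete intersection of type $(4,4)$, the projection $\pi_P(Z)$ is a set of $16$ points in the plane lying in a pencil of quartics, so $\dim [I_{\pi_P(Z)}]_4 = \dim [I_Z \cap I_P^4]_4 = 2$. Therefore
\[
\delta \CC(4) = 2 - \dim [I_Z]_4 + \binom{6}{3} = 22 - \dim[I_Z]_4,
\]
and the whole problem reduces to showing $\dim [I_Z]_4 \geq 19$, equivalently $h_Z(4) \leq \binom{7}{3} - 19 = 16$, i.e. that $Z$ imposes independent conditions on quartic surfaces. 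Since $\ell(Z) = 16$ and $Z$ is zero-dimensional, $h_Z(4) \leq 16$ always holds (this is just Proposition \ref{elem}(a), (b)), so in fact $\delta\CC(4) \geq 3$ automatically from this side — and the claimed inequality $\delta\CC(4) \leq 3$ is then equivalent to the \emph{equality} $h_Z(4) = 16$, i.e. $Z$ imposes independent conditions on quartics.

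So the real content is: \textbf{a set of $16$ points in $\mathbb P^3$ satisfying $\CC(4)$ but not $\CC(3)$ imposes independent conditions on quartic surfaces.} I would prove the contrapositive: if $h_Z(4) < 16$, then either $\CC(4)$ fails or $\CC(3)$ holds. The tool is the structure of $h$-vectors of points in $\mathbb P^3$ together with Theorem \ref{BGM}. If $Z$ fails to impose independent conditions on quartics, then $\Delta h_Z$ must have a ``long plateau'': more precisely $h_Z(3) = h_Z(4) < 16$ would already force (via Proposition \ref{elem} and the admissibility of $h$-vectors, Macaulay's bound) a tail of the form $\Delta h_Z = (\dots, 1,1,1,\dots)$ reaching far enough that Theorem \ref{BGM} produces a long collinear subset of $Z$ — concretely a subset of $\geq 6$ points on a line, or $Z$ sitting on a degenerate/low-degree curve. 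In each such configuration I would argue, exactly as in the $d=3$ analysis (Lemmas \ref{indepcond}, \ref{planar pts} and the remarks preceding Theorem \ref{proj CI33}), that the general projection cannot avoid a forced component and compare $\dim[I_Z]_4 - \binom{6}{3}$ with $\dim[I_{\pi_P(Z)}]_4$ to see that the cones are \emph{not} unexpected, contradicting $\CC(4)$ — unless the configuration is precisely one that forces $\CC(3)$ (a space curve of degree $3$ through $Z$, an ``improper'' situation), which is excluded by hypothesis.

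The main obstacle I anticipate is the case analysis over the possible $h$-vectors: unlike the $9$-point case there are many more admissible truncated $h$-vectors of length $16$ with a unit tail, and for each one I must exhibit either a geometric obstruction to the projection being special enough (so $\CC(4)$ fails) or show $Z$ lies on a non-degenerate cubic space curve (so we are in the improper $\CC(3)$ situation, contradicting ``not $\CC(3)$''). The cleanest route is probably to show first that $\CC(4)$ together with non-degeneracy forces $\Delta h_Z(1) = 3$ and then that a plateau of $1$'s of length $\geq 2$ starting at degree $\leq 3$ already yields $\geq 6$ collinear points, at which point any quartic through the projection splits off that line and a direct dimension count (as in Remark \ref{no4all}-style arguments) kills unexpectedness. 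Assembling these pieces gives $h_Z(4) = 16$, hence $\dim[I_Z]_4 = 19$ and $\delta\CC(4) = 22 - 19 = 3$, which is even sharper than the stated bound and in particular establishes $\delta\CC(4) \leq 3$.
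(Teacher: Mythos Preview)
Your opening move is a genuine error: you assert that ``$Z$ has a general projection which is a complete intersection of type $(4,4)$,'' but this is \emph{not} part of the hypothesis. The proposition only assumes $\CC(4)$ and not $\CC(3)$; nothing in that says the projection lies on exactly a pencil of quartics. In fact, the paper immediately after this proposition exhibits a set $Z'$ of $16$ points (five general points on each of three skew lines plus one general point) with $\CC(4)$, not $\CC(3)$, and $\delta\CC(4)=3$, whose general projection is \emph{not} a complete intersection and for which $\dim[I_{\pi_P(Z')}]_4 = 2$ only by a separate computation. So you cannot simply read off $\dim[I_Z\cap I_P^4]_4 = 2$, and your identity $\delta\CC(4) = h_Z(4) - 13$ is unjustified in general.

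There is also an internal confusion: even granting your unwarranted assumption, the inequality $h_Z(4)\le 16$ would give $\delta\CC(4)=h_Z(4)-13\le 3$ immediately, and you would be done. The long program to prove $h_Z(4)=16$ is irrelevant to the stated bound (and your line ``so in fact $\delta\CC(4)\ge 3$ automatically'' has the inequality the wrong way round).

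The paper's actual argument runs in the opposite direction from yours. It does \emph{not} fix $\dim[I_Z\cap I_P^4]_4$ and then bound $h_Z(4)$; instead it assumes $\delta\CC(4)\ge 4$, writes
\[
\delta\CC(4)=\dim[I_Z\cap I_P^4]_4 + h_Z(4) - 15,
\]
and splits into two cases. If $h_Z(4)=16$, this forces $\dim[I_{\pi_P(Z)}]_4\ge 3$; one first rules out that $\pi_P(Z)$ lies on a cubic (using that $\CC(3)$ fails to bound $h_Z(3)$ and then checking that all compatible $h$-vectors of $Z$ contradict $h_Z(4)=16$), and then the few admissible $h$-vectors of $\pi_P(Z)$ beginning $(1,2,3,4,2,\dots)$ all force, via Theorem~\ref{BGM} and Proposition~\ref{CC2}, many collinear points in $Z$, again contradicting independent conditions. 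If $h_Z(4)<16$, then $\dim[I_{\pi_P(Z)}]_4\ge 4$, which forces the projection's $h$-vector to have such a long unit tail that $Z$ contains $\ge 10$ collinear points, and a direct count of conditions then makes $\dim[I_{\pi_P(Z)}]_4\ge 8$, which is absurd. The key idea you are missing is that the defect equation lets you push information \emph{from} the defect hypothesis \emph{to} the projection's Hilbert function, rather than the other way around.
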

\begin{proof}
For a set $Z$ of $16$ points whose general projection is a complete intersection of type $(4,4)$ we have, for a general point $P$, that  $\dim [I_Z \cap I_P^4]_4 = 2$, and 
\[
\delta \CC(4) = 2 - [35 - h_Z(4)] + 20 = h_Z(4) - 13 \leq 3
\]
with equality if and only if $Z$ imposes independent conditions on quartics.

This assumed that the general projection is a complete intersection. In fact, we claim that the defect $\delta \CC(4)=3$ for a set of $16$ points is maximal, {\em among sets satisfying $\CC(4)$ and not $\CC(3)$.}
Indeed, assume that $Z$ has defect $\delta C(4)=4$ and it does not satisfy $\CC(3)$. This means that 
\[
4 =  \delta \CC(4) = \dim [I_Z \cap I_P^4]_4 - [35 - h_Z(4)] + 20 = \dim [I_Z \cap I_P^4]_4 + h_Z(4) - 15.
\]
Thus $\dim [I_Z \cap I_P^4]_4 = 19 - h_Z(4)$.

\medskip

\noindent \underline{Case 1}. If $Z$ imposes independent conditions on quartics, then $h_Z(4) = 16$, so we have
\begin{equation} \label{eq1}
\dim [I_Z \cap I_P^4]_4 = 3.
\end{equation} 
We first claim that the general projection cannot lie on a cubic curve. Indeed, if it did then we would have $\dim [I_Z \cap I_P^3]_3 \geq 1$. But $Z$ does not satisfy $\CC(3)$, so such a cubic surface is not unexpected, i.e.
\[
1 \leq \dim [I_Z \cap I_P^3]_3 = [20 - h_Z(3) - 10] = 10 - h_Z(3)
\]
so $h_Z(3) \leq 9$. Thus the $h$-vector of $Z$ begins $(1,3,a_2,a_3,\dots)$ where $a_2 + a_3 \leq 5$. This forces it to be one of
{\small 
\[
 (1,3,4,1, \dots, 1),\  (1,3,3,2,a_4,a_5,\dots) ,\  (1,3,2,2,a_4,a_5,\dots),  \ (1,3,2,1, \dots,1),  \ (1,3,1,\dots,1)
\] }
where the sum of all entries is $16$. These all force $Z$ to fail to impose independent conditions on quartics, giving a contradiction. Thus the general projection $\pi_P(Z)$ does not lie on a cubic curve. Then using (\ref{eq1}) we get that $\pi_P(Z)$ has $h$-vector beginning with:
$$ (1,2,3,4,2, a_5,a_6, \dots).$$
This means the possible $h$-vectors are
\[
(1,2,3,4,2,2,2), (1,2,3,4,2,2,1,1), (1,2,3,4,2,1,1,1,1).
\]
The second and third force the general projection $\pi_P(Z)$ to contain $8$ or $9$ points on a line, respectively, by Proposition \ref{elem}, which means that also $Z$ contains that number of points on a line, contradicting the assumption that $Z$ imposes independent conditions on quartics. By \cite{BGM94} Theorem 3.6, the first $h$-vector forces $13$ points on a conic (possibly reducible). By semicontinuity, there is a subset of $13$ of the points of $Z$ with property $\CC(2)$, so by Proposition \ref{CC2}, $Z$ contains $13$ points on a pair of two skew lines, again contradicting independent conditions.

\medskip

\noindent \underline{Case 2}. If $Z$ does not impose independent conditions on quartics, then $h_Z(4) < 16$ so $\dim [I_Z \cap I_P^4]_4 \geq 4$. This means 
$\dim [I_Z \cap I_P^4]_4 = \dim [I_{\pi_P(Z)}]_4 = 4$ (it cannot be $5$ since the $h$-vector cannot be zero in degree $4$).
The shortest possible $h$-vector of $\pi_P(Z)$ is then
\[
(1,2,3,4,1,1,1,1,1,1).
\]
(We do not claim that the first four entries must be as written, but anything else would be even worse from our point of view.) Thus $\pi_P(Z)$, and hence also $Z$, contains at least $10$ points on a line, so 
it imposes at most $(4+1) + 6 =11$ conditions on quartics. Thus from above, 
\[
\dim [I_{\pi_P(Z)}]_4 = \dim [I_Z \cap I_P^4]_4 = 19 - h_Z(4) \geq 19-11 = 8,
\]
which is impossible.
\end{proof}

\begin{example}
If $Z$ is a $(4,4)$-grid, then a general projection of $Z$ is a complete intersection of type $(4,4)$, and $Z$ imposes independent conditions on quartics.
Thus, as explained at the beginning of the proof of the previous proposition, $\delta \CC(4)=3$ for $Z$. Then, $(4,4)$-grids have maximal defect
$\delta \CC(4)$ among sets satisfying $\CC(4)$ and not $\CC(3)$.

On the other hand, notice that there do exist sets $Z'$ of $16$ points with $\delta \CC(4)=3$ and not satisfying $\CC(3)$, whose
general projection is not a complete intersection: for one,
let $Z'$ be a set of points in $\mathbb P^3$ consisting of $5$ general points on each of three skew lines, plus a general point. The $h$-vector of $Z'$ is $(1,3,6,3,3)$, so $\dim [I_{Z'}]_4 = 35 - 16 = 19$ ($Z'$ imposes independent conditions on quartics). The projection $\pi_P$ of $Z'$ from a general point $P$ consists of $5$ general points on each of three lines in the plane plus one general point, so $\dim [I_{Z'} \cap I_P^4]_4 = \dim [I_{\pi_P(Z')}]_4 = 2$ (an easy calculation). Then
\[
\delta \CC (4) = \dim [I_{Z'} \cap I_P^4]_4 - \dim [I_{Z'}]_4 + \binom{4+2}{3} = \dim [I_{Z'} \cap I_P^4]_4 +1 = 3.
\]
\end{example}

\begin{remark} \label{ptci}
Summarizing, there are strong links between properties $\CC(d)$ and the property that a general projection is complete
intersection, but they are conditioned on the addition of other \emph{geometric} properties, e.g. the fact that
a general projection of $Z$ is in a mild form of uniform position.

This opens an unexplored field of investigation. We do not go further in the matter, and leave it to future studies. 
\end{remark}

We also want to explore what can happen with a collection of points on a curve with respect to the property $\CC(d)$. In \cite{HMNT} the authors formed some corollaries of the result quoted in Proposition \ref{cone in P3} and Remark \ref{pts-curves} of this paper. We consolidate them here and phrase it in the language of $\CC(d)$.

\begin{corollary}
Let $C$ be a smooth, irreducible, non-degenerate curve of degree $d \geq 3$ in $\mathbb P^3$. Let $P \in \mathbb P^3$ be a general point. Let $Z \subset C$ be any set of at least $d^2+1$ points on $C$ (general or not). 

\begin{itemize}

\item[(a)] Then the cone $S_P$ over $C$ with vertex $P$ is unexpected, hence $Z$ satisfies $\CC(d)$. In fact, $S_P$  is the unique unexpected surface of this degree and multiplicity.

\item[(b)] Let $k \geq d$ be a positive integer. Then $C$  satisfies $\CC(k)$,  by taking the product of $S_P$ and an element of $[I_P^{k-d}]_{k-d}$.

\end{itemize}
\end{corollary}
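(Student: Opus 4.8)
The plan is to obtain everything from Proposition~\ref{cone in P3} and Remark~\ref{pts-curves}, which already handle the curve $C$ itself: part (a) then needs only a B\'ezout step transferring the conclusion from $C$ to the subset $Z$, while part (b) needs only a multiplication step together with a short count of the postulation of $C$.

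For part (a), I would first show $[I_Z]_d=[I_C]_d$. Since $C$ is integral of degree $d$, any surface of degree $d$ not containing $C$ meets it in at most $d^2$ points; as $Z\subset C$ has at least $d^2+1$ points, every $F\in[I_Z]_d$ must vanish on all of $C$. (The same count in degree $1$ shows no plane contains $Z$, so $Z$ is non-degenerate and $\CC(d)$ makes sense for it.) Intersecting with the linear conditions imposed by $dP$ gives $[I_Z\cap I_P^d]_d=[I_C\cap I_P^d]_d$. Proposition~\ref{cone in P3} says the cone $S_P$ over $C$ with vertex $P$ is the unique unexpected surface of degree $d$ and multiplicity $d$ for $C$, which in the paper's terminology means the expected value $\max\{0,\dim[I_C]_d-\binom{d+2}{3}\}$ is $0$ and $\dim[I_C\cap I_P^d]_d=1$. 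By Remark~\ref{pts-curves}, applied with the equality $[I_C]_d=[I_Z]_d$, the same cone $S_P$ is unexpected for $Z$, so $Z$ satisfies $\CC(d)$; and since $\dim[I_Z\cap I_P^d]_d=\dim[I_C\cap I_P^d]_d=1$, this unexpected surface is unique.

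For part (b), let $F$ be a defining form of the cone $S_P$ above ($\deg F=d$, multiplicity $d$ at $P$, vanishing on $C$) and let $G\in[I_P^{k-d}]_{k-d}$ be any form, for instance a product of $k-d$ general hyperplanes through $P$. Then $FG$ has degree $k$, vanishes on $C$, and has multiplicity at $P$ equal to $d+(k-d)=k$, since it is at least $k$ and cannot exceed the degree. Hence $F\cdot[I_P^{k-d}]_{k-d}\subseteq[I_C\cap I_P^k]_k$ is a subspace of dimension $\binom{k-d+2}{2}\geq 1$. To see that this beats the expected value I would use $h_C(k)=dk+1-g$ for $k\geq d$, where $g=g(C)$; this holds because a non-degenerate integral curve of degree $d$ in $\pp^3$ is $(d-1)$-regular (Gruson--Lazarsfeld--Peskine), or it may be checked by a direct computation of the postulation of $C$. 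Then
\[
\dim[I_C]_k-\binom{k+2}{3}=\binom{k+2}{2}-h_C(k)=\binom{k-d+2}{2}-\left(\binom{d-1}{2}-g\right),
\]
which is strictly smaller than $\binom{k-d+2}{2}$ because $g<\binom{d-1}{2}$ for every non-degenerate smooth space curve of degree $d\geq 3$ (equality would force $C$ to be a smooth plane curve). Therefore $\dim[I_C\cap I_P^k]_k\geq\binom{k-d+2}{2}>\max\{0,\dim[I_C]_k-\binom{k+2}{3}\}$, i.e. $C$ satisfies $\CC(k)$.

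The only step that is not pure bookkeeping is the inequality forcing unexpectedness in part (b): it rests on the classical facts that a non-degenerate integral space curve of degree $d$ is $(d-1)$-regular and has genus strictly below $\binom{d-1}{2}$. Since these statements --- and hence the corollary --- are the content of the corollaries drawn from Proposition~\ref{cone in P3} in \cite{HMNT}, in the write-up it should suffice to assemble those, being slightly careful in part (a) that B\'ezout is being applied to the possibly singular, but integral, curve $C$.
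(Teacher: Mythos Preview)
Your argument is correct and matches the route the paper indicates. The paper does not actually give a proof here: it states the corollary as a consolidation of results from \cite{HMNT}, deriving them from Proposition~\ref{cone in P3} and Remark~\ref{pts-curves}, which is precisely what you do. Part (a) is exactly the B\'ezout transfer $[I_Z]_d=[I_C]_d$ feeding into Remark~\ref{pts-curves}; part (b) is the product construction already named in the statement itself, together with the numerical check (via regularity and the genus bound) that these products are genuinely unexpected---this check is the one piece of content the paper leaves entirely to \cite{HMNT}, and you supply it correctly.

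Two very minor comments. First, your closing caution about ``the possibly singular, but integral, curve $C$'' is unnecessary: the hypothesis already has $C$ smooth. Second, the parenthetical justification ``equality would force $C$ to be a smooth plane curve'' for $g<\binom{d-1}{2}$ is a bit loose as stated; the clean reason is Castelnuovo's bound for non-degenerate curves in $\pp^3$, which is strictly below $\binom{d-1}{2}$ for all $d\geq 3$. Neither affects the validity of the proof.
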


We make two comments. The first is that one might hope to find an example of a set of points that project to a complete intersection of type $(r,s)$ using this last fact,  choosing $d=r$ and $k=s $ suitably. The trouble is twofold: (i) in that setting $S_P$ is always a factor of the surfaces proving $\CC(r)$ and $\CC(s)$, so we do not get that the projection is a complete intersection; (ii) if we are focused on a finite set of points $Z$ on $C$, eventually there are surfaces that contain $Z$ but not $C$, which changes the situation.

The second comment is that the bound $d^2+1$ in (a) is much bigger than necessary. We illustrate this with an example.

\begin{example}
Let $C$ be a general (ACM) smooth curve of degree $d=6$ and genus $3$ in $\mathbb P^3$. Its Hilbert function is given by

\medskip

\begin{center}

\begin{tabular}{c|cccccccccccccccccccc}
degree $t$ & 0 & 1 & 2 & 3 & 4 & 5 & 6 & 7 & 8 & \dots \\ \hline
$h_C(t)$ & 1 & 4 & 10 & 16 & 22 & 28 & 34 & 40 & 46 & \dots 
\end{tabular}

\end{center}

\medskip

\noindent The Hilbert function of a general set $Z$ of $N$ general points on $C$ has Hilbert function equal to $h_Z(t) = \min \{ h_C(t), N \}$ for each $t \geq 0$.

The above corollary says that a set $Z$ of $d^2+1 = 37$ general points on $C$ satisfies $\CC(6)$ because the cone over $C$ is an unexpected sextic for $Z$ (hence also a general projection of $Z$ lies on the sextic curve arising as the projection of $C$). In fact it is clear from the Hilbert function that for a set $Z$ of at least $34$ general points on $C$ we must have $[I_C]_6 = [I_Z]_6$, so the conclusion still holds. In particular, for $36$ general points we have only one unexpected cone, so the projection property cannot hold -- the projection is not the complete intersection of two sextics. (This is true even if the points are not general.)

But one can also move away from the condition $[I_C]_6 = [I_Z]_6$ and instead go back to the definition of unexpected surfaces. From the Hilbert function we see that for $N \leq 34$, a general set $Z$ of $N$ points on $C$ imposes independent conditions on sextics. The number of conditions expected to be imposed by a point of multiplicity $6$ is $\binom{8}{3} = 56$. Now assume $N \geq 28$. This gives
\[
\dim [I_Z]_6 - \binom{8}{3} = \binom{9}{3} - N - \binom{8}{3} \leq 0,
\] 
so the existence of $S_P$ means $Z$ satisfies $\CC(6)$, even though $[I_C]_6 \subsetneq [I_Z]_6$.

As a final observation in this example, notice that $30$ general points on $C$ have no hope of satisfying $\CC(5)$ since if it did, we could then produce infinitely many surfaces of degree $6$ and multiplicity $6$ at $P$ (by multiplying such a quintic by a general linear form vanishing at $P$), contradicting the uniqueness of $S_P$.
\end{example}

The considerations made in this paper lead us to the following conjecture, as mentioned in the introduction.

\begin{conjecture}
If $Z \subset \mathbb P^3$ is a set of $\geq 5$ points in linear general position then the general projection of $Z$ to a plane is not a complete intersection.
\end{conjecture}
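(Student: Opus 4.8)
The plan is to prove the contrapositive in a form that mirrors the $9$-point argument of Theorem~\ref{proj CI33}: assuming $Z$ is in linear general position and that a general projection $\pi_P(Z)$ is a complete intersection of plane curves of type $(r,s)$ with $rs = \ell(Z)$, derive a contradiction. First I would reduce to the case $r,s \geq 2$, since if $r = 1$ the projection lies on a line, forcing $\ell(Z)$ collinear points in $Z$ (the line is fixed as $P$ moves), contradicting linear general position as soon as $\ell(Z) \geq 3$. So we may assume $2 \leq r \leq s$, and in particular $\ell(Z) = rs \geq 4$; the hypothesis $\ell(Z) \geq 5$ rules out the genuinely general $(2,2)$ case of four points.

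The core of the argument is the same ``moving the vertex onto a special subvariety'' technique used throughout Section~\ref{C(3) section}. Pick a subset $Y \subset Z$ chosen so that $Y$ lies on a low-degree auxiliary variety (a plane, a twisted cubic, or more generally a rational normal-type curve obtained from Castelnuovo's Lemma), and project from a general point $P$ of that auxiliary variety rather than from a general point of $\pp^3$. Then $\pi_P(Y)$ becomes degenerate in the plane — e.g. if $Y$ lies on a line through $P$, $\pi_P(Y)$ is a single point; if $Y$ spans a plane containing $P$, the points of $\pi_P(Y)$ become collinear; if $Y$ lies on a conic or twisted cubic and $P$ is on it, $\pi_P(Y)$ lies on a conic — while $\pi_P(Z \setminus Y)$ stays in linear general position by genericity of $P$ on the auxiliary locus. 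By Remark~\ref{semicont}, $\pi_P(Z)$ must still lie on a positive-dimensional family of curves of degree $s$ (and of degree $r$), even though the complete-intersection property may degenerate; one then uses Bezout and the linkage/Cayley--Bacharach fact (six points on a conic link to three collinear points, and its degree-$s$ generalizations) to show this family is forced to produce collinear or coplanar points in $Z$, contradicting linear general position.

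Concretely I would organize the cases as in the proof of Theorem~\ref{proj CI33}: first handle $Z$ with no three collinear and no four coplanar points directly (this is exactly linear general position), using that a suitable subset of the points lies on a rational curve of small degree by Castelnuovo's Lemma, and that projecting from a point of that curve makes the image lie on a curve of degree one less, after which Bezout forces that auxiliary curve to be a component of every member of the family of plane curves through $\pi_P(Z)$ — leaving too few points off it to move in a family, the contradiction. For larger $\ell(Z)$ one needs the observation that as soon as a pencil (or larger system) of degree-$s$ curves exists through a set of $rs$ points with $\geq s^2$ of them on an auxiliary curve $\pi_P(C)$ of degree less than $s$, Bezout pins down $\pi_P(C)$ as a fixed component and the residual set of fewer than $s^2$ points cannot have the required Hilbert-function behavior — i.e. it cannot fail to impose independent conditions on the residual linear system — so the family collapses to a single curve, contradicting Remark~\ref{semicont}.

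The main obstacle, and the reason this is only a conjecture in the paper, is the uniformity in $\ell(Z) = rs$: for the $9$-point case one can enumerate all $h$-vectors and all configurations by hand, but for general $r,s$ one needs a structural statement — roughly, that a set of $rs$ points in linear general position in $\pp^3$ cannot have a general projection whose $h$-vector is that of a complete intersection of type $(r,s)$ — and the combinatorics of which subsets $Y$ to choose, and of the resulting residual Hilbert functions, does not obviously close up. In particular, controlling the step where one must rule out that $\pi_P(Z)$ lies on a unique curve of degree $s$ requires knowing that linear general position in $\pp^3$ is preserved (in a suitably weak ``uniform position'' sense, cf. Remark~\ref{ptci}) under projection from a point of an auxiliary curve, and quantifying how many points of $Z$ can lie on such a curve; making this precise for all $(r,s)$ is exactly what I expect to be the hard part, and is presumably why the authors leave it open.
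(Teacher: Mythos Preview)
The statement you are addressing is a \emph{conjecture} in the paper, not a theorem; the authors give no proof, and indeed explicitly present it as open (it appears twice, once in the introduction and once at the end of Section~\ref{geom section}). So there is no proof in the paper to compare your proposal against, and your closing paragraph is exactly right: the case-by-case enumeration that drives Theorem~\ref{proj CI33} does not obviously scale to arbitrary $(r,s)$, which is precisely why the authors leave the statement as a conjecture.

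That said, a few of the intermediate steps in your sketch are looser than you suggest, even before one hits the combinatorial explosion. The claim that ``Bezout forces the auxiliary curve to be a component of every member of the family'' only fires when the auxiliary curve meets the plane curves in strictly more than the Bezout bound; in the paper's $(3,3)$ argument the conic through six projected points does \emph{not} become a fixed component by Bezout (six is exactly $2\cdot 3$), and the contradiction instead comes from the Cayley--Bacharach/linkage fact that six points on a conic are residual to three collinear points. Your proposed degree-$s$ generalization of this linkage step is where the real work would lie, and it is not clear what the correct statement is. Likewise, Castelnuovo's Lemma only puts six points of $Z$ on a twisted cubic; your later reference to ``$\geq s^2$ of them on an auxiliary curve $\pi_P(C)$'' cannot come from that source, and you would need a different mechanism to place many points of $Z$ on a low-degree curve as $rs$ grows. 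These are not fatal to the overall strategy, but they confirm your own assessment that the argument, as written, is a plausible outline rather than a proof.
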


\noindent On the other hand, in the appendix we show that there do exist non-grids whose general projection is a complete intersection. 
These seem to be very hard to come by!

\vfill\eject


\section{APPENDIX} \label{non-grid ex section}

\begin{center}
{\bf Unexpected examples of sets of points whose general projections are complete intersections}
\end{center}
\smallskip

\centerline{ \it by  A.\ Bernardi, L.\ Chiantini, G.\ Denham, G.\ Favacchio, B.\ Harbourne,}
\centerline{\it J.\ Migliore, T.\ Szemberg, J.\ Szpond.}
\bigskip

Unexpected cone properties for a finite set of points  are related to the property that the general projection of the points is a complete intersection, and we have seen that grids  have both properties (with a small caveat when $a=2$ -- see Theorem \ref{gridex}). In this section we focus on the latter property. This example grew out of a research work group at the Workshop on Lefschetz Properties and Jordan Type in Algebra, Geometry and Combinatorics, held  in Levico Terme, Italy, on June 25--29, 2018. 

Based on the evidence from the low degree cases given in the preceding sections, one might be tempted to conjecture that any non-degenerate set of points in $\mathbb P^3$ whose general projection to a plane is a complete intersection must, in fact, be a grid. The  purpose of this appendix is to show that it is not true, although examples seem to be hard to come by. 

Specifically, we show how to construct a set of $24$ points,  and also  subsets of $20$, $16$ and $12$ points, none of which is a grid, but all four of which have the projection property.

The paper \cite{DIV} gave the first example of an unexpected curve in the plane (although they did not use that language), and pointed out that it came from the $B_3$ line arrangement. Then \cite{HMNT} explored the surprisingly strong connection between root systems in general and unexpected hypersurfaces. In particular, they studied the root system $F_4$, which gives the following set, $Z_{F_4}$, of $24$ points in $\mathbb P^3$:

{\small
\[
\begin{array}{l}
\{ 
[1,0,0,0],[0,1,0,0],  [0,0,1,0],[0,0,0,1],[1,1,0,0], [1,-1,0,0],[1,0,1,0],  [1,0,-1,0], [1,0,0,1], 
\end{array}
\]
\[
\begin{array}{lll|}
[1,0,0,-1], [0,1,1,0],[0,1,-1,0], 
[0,1,0,1], [0,1,0,-1], [0,0,1,1], [0,0,1,-1], [1,1,1,1], 
\end{array}
\]
\[
\begin{array}{lll}
[1,1,1,-1],[1,1,-1,1],[1,-1,1,1],[1,1,-1,-1],[1,-1,1,-1],[1,-1,-1,1],[1,-1,-1,-1] 
\}.
\end{array}
\]

\noindent In \cite{HMNT} the authors checked that  $Z_{F_4}$ satisfies several properties $\CC(d)$ (without using this notation). Indeed, they experimentally computed the following list of dimensions of the systems of cones through $Z_{F_4}$ with vertex at a general point $Q\in\pp^3$:

\medskip

\begin{center}\begin{tabular}{c|cccccccc}
$d$          & 3 & 4 & 5 & 6 & 7 & 8&  \dots \\ \hline
dim          & 0 & 1 & 3 & 7 & 13 & 21 &  \dots \\ \hline
expected & 0 & 0 & 0 & 4 & 12 & 21 &  \dots \\ \hline
                 &  & unexp.  &unexp. & unexp. & unexp. &  &  \dots 
\end{tabular} 
\end{center}

\medskip

\noindent It follows that $Z_{F_4}$ satisfies $\CC(4),\CC(5),\CC(6),\CC(7)$. 
One can check by hand (or computer) that $Z_{F_4}$ is not a $(4,6)$-grid. One can then verify on the computer that the general projection of $Z_{F_4}$ is a complete intersection of type $(4,6)$.

It would be good to have a theoretical argument for the fact that the projection is a complete intersection. Here is a start.

   Let $Q=(a(0):a(1):a(2):a(3))$ be a general point in $\mathbb P^3$.
   Let $I=\left\{j,k,l\right\} \subset\left\{0,1,2,3\right\}$,  with $i,j,k$ distinct and let
   $$f_i=3a(j)(a(k)^2-a(l)^2)x(j)^2x(k)x(l)
        +3a(k)(a(l)^2-a(j)^2)x(j)x(k)^2x(l)
   $$$$     +3a(l)(a(j)^2-a(k)^2)x(j)x(k)x(l)^2$$
   $$   +a(j)^3(x(k)^3x(l)-x(k)x(l)^3)
        +a(k)^3(x(j)x(l)^3-x(j)^3x(l))
        +a(l)^3(x(j)^3x(k)-x(j)x(k)^3),$$
   where $i\in\left\{0,1,2,3\right\}\setminus I$.
   
   This is the unique curve in $\mathbb P^2(x(j):x(k):x(l))$ vanishing at the $B_3$ configuration
   of points and having a point of multiplicity $3$ at $(a(j):a(k):a(l))$, see \cite[Section 2]{BMSS}.
   It can be also viewed as a cone in $\mathbb P^3(x(0):x(1):x(2):x(3))$.
   
   It can be easily checked that the quartic
   $$f=-a(0)f_0+a(1)f_1-a(2)f_2+a(3)f_3$$
   vanishes to order $4$ at $Q$ and vanishes at all points of $Z_{F_4}$. Since the expected dimension of the quartic cones containing $Z_{F_4}$ with general vertex is $0$, this shows that the points of  $Z_{F_4}$ satisfy $\CC(4)$. 
   
}

In more detail, it turns out (experimentally, using CoCoA \cite{cocoa}) that  $Z_{F_4}$ has $18$ sets of $4$ collinear points, and no subset of $\geq 5$ collinear points. Hence it is not a $(4,6)$-grid.

Now, in general suppose $X$ is a complete intersection of type $(4,d)$ in the plane, defined by forms $G_4, G_d$ respectively. Assume that $X$ contains a subset of four collinear points, $X_1$, defined by forms $L, F$ of degrees $1$, $4$ respectively. By Proposition \ref{linkci}, as long as $G_4$ is not a multiple of $L$, the residual $X \backslash X_1$ is a complete intersection of type $(4,d-1)$. 

Motivated by this fact, experimentally we were able to remove $4$ collinear points from $Z_{F_4}$ to produce a subset $Z_1$ of $20$ points that is not a grid, and remove a second set of $4$ collinear points to produce a subset $Z_2$ of $16$ points that is not a grid, but with the property that the general projection of $Z_1$ is a complete intersection of type $(4,5)$ and the general projection of $Z_2$ is a complete intersection of type $(4,4)$. For instance, we can take
{\small 
\[
Z_1 = \{ [1, 0, 0, 0], [0, 1, 0, 0], [1, 1, 0, 0], [1, -1, 0, 0], [1, 0, 1, 0], [1, 0, -1, 0], [1, 0, 0, 1], [1, 0, 0, -1],
\]
\[
 [0, 1, 1, 0], [0, 1, -1, 0], [0, 1, 0, 1], [0, 1, 0, -1], [1, 1, 1, 1], [1, 1, 1, -1], [1, 1, -1, 1], [1, -1, 1, 1], 
 \]
 \[[1, 1, -1, -1], [1, -1, 1, -1], [1, -1, -1, 1], [1, -1, -1, -1] \}
\]
}
and
{\small 
\[
Z_2 = \{ [0,1,0,-1],[0,1,0,0],[0,1,0,1], [0,1,1,0],[1,-1,0,0],[1,0,-1,0],[1,0,0,0], [1,0,0,1], 
\]
\[
[1,0,1,0], [1,1,0,0],[1,-1,-1,-1], [1,-1,-1,1],[1,-1,1,1],[1,1,-1,1],[1,1,1,-1],[1,1,1,1] \}.
\]
}
($Z_2$ has only four sets of four collinear points, hence is not a $(4,4)$-grid. Note that this is not the $B_4$ configuration, and in fact the $B_4$ configuration does not have the projection property.) 

Unfortunately the same trick does not work one more time: every removal of four collinear points from $Z_2$ results in a $(3,4)$-grid.
However, in \cite{HMNT} the authors also considered the root system $D_4$, which corresponds to the set of $12$ points 
{\small
\[
Z_3 =  \{ 
[1,1,0,0], [1,-1,0,0],[1,0,1,0],  [1,0,-1,0], [1,0,0,1], 
[1,0,0,-1], [0,1,1,0],[0,1,-1,0], 
\]
\[
[0,1,0,1], [0,1,0,-1], [0,0,1,1], [0,0,1,-1]
\}.
\]
}
It was noted in \cite{HMNT} that $Z_3$ satisfies $\CC(3)$ and $\CC(4)$, and we checked on the computer that the general projection is a complete intersection and that $Z_3$ is not a $(3,4)$-grid. Notice that $Z_3$ is contained in $Z_{F_4}$, even if it is not contained in $Z_2$. These $12$ points have $16$ subsets of $3$ collinear points, but no subset of $4$ collinear points. It is also possible to find other sets of $12$ points that project to a complete intersection of type $(3,4)$. For example,
the points in the following list
\small{
\[
Z_4 = \{ 
[1,1,1,1]
[1,1,1,-1],
[1,1,-1,1],
[1,1,-1,-1],
[1,-1,1,1],
[1,-1,1,-1], 
\]
\[
\ \ \ \ \ \ [1,-1,-1,1],
[1,-1,-1,-1],
[1,0,0,0],
[0,1,0,0],
[0,0,1,0],
[0,0,0,1]
\}
\]
}have this property. However, one can check that it is projectively equivalent to the $D_4$ configuration via the matrix
\[
\left [
\begin{array}{cccc}
1& 0 & 0 & 1 \\
1 & 0 & 0 & -1 \\
0 & 1 & 1 & 0 \\
0 & 1 & -1 & 0
\end{array}
\right ]
\]
(which sends $Z_4$ to $Z_3$ and also sends $Z_3$ to $Z_4$).

\begin{question} \label{LGP question}
Are there any examples of $ab$ points in $\mathbb P^3$, other than $ab = 12$, $16$, $20$ or $24$, that are not $(a,b)$-grids but that have a general projection that is a complete intersection of type $(a,b)$?
\end{question}

\vfill\eject

\end{document}